\newtheorem{theorem}{Theorem}[section]
\newtheorem{lemma}[theorem]{Lemma}
\newtheorem{corollary}[theorem]{Corollary}
\newtheorem{proposition}[theorem]{Proposition}
\newtheorem{question}[theorem]{Question}
\newtheorem{problem}[theorem]{Problem}
\newtheorem{definition}[theorem]{Definition}
\newtheorem{remark}[theorem]{Remark}
\newtheorem{observation}[theorem]{Observation}
\newcommand\Z{\mathbb{Z}}
\newcommand\KI{\mathbb{K}_\infty}
\newcommand\Kn{\mathbb{K}_n}
\newcommand\Kfour{\mathbb{K}_4}
\newcommand\thegraph{\ensuremath{\mathcal{B}}}
\newcommand\thehypgraph{\ensuremath{\mathcal{B}_H}}
\title{The Big Dehn Surgery Graph and the link of $S^3$} 
\author{Neil R. Hoffman and Genevieve S. Walsh} 
\subjclass[2000]{Primary 57M25; Secondary 57M50}
\begin{document} 
\dedicatory{Dedicated to Bill Thurston}
\maketitle

\begin{abstract} 
In a talk at the Cornell Topology Festival in 2004, W. Thurston discussed a graph which we call ``The Big Dehn Surgery Graph", \thegraph.  Here we explore this graph, particularly the link of $S^3$, and prove facts about the geometry and topology of \thegraph.  We also investigate some interesting subgraphs and pose what we believe are important questions about \thegraph. \end{abstract} 

\section{Introduction} \label{sect:Intro}
In unpublished work, W. Thurston described a graph that had a vertex $v_M$ for each closed, orientable, 3-manifold $M$ and an edge between two distinct vertices $v_M$ and $v_{M'}$,  if there exists a Dehn surgery between $M$ and $M'$.  That is, there is a knot $K \subset M$ and $M'$ is obtained by non-trivial Dehn surgery along $K$ in $M$.  The edges are unoriented since $M$ is also obtained from $M'$ via Dehn surgery.  Roughly following W. Thurston, we will call this graph the Big Dehn Surgery Graph, denoted by $\thegraph$. We will sometimes denote the vertex $v_M$ by $M$.   If $M$ and $M'$ are obtained from one another via Dehn surgery along two distinct knots, we do not make two distinct edges, although this would also make an interesting graph. We first record some basic properties of $\thegraph$.  These follow from just some of the extensive work that has been done in the field of Dehn surgery. 

\begin{proposition} \label{prop:basic} 
The graph $\thegraph$ has the following basic properties: 
 (i) \thegraph \  is connected; 
(ii) \thegraph \  has infinite valence; 
and (iii) \thegraph \  has infinite diameter. 
\end{proposition}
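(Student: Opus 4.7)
\medskip

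\noindent\textbf{Proof plan.} For part (i), the plan is to apply the Lickorish--Wallace theorem: every closed orientable 3-manifold $M$ is obtained from $S^3$ by integer Dehn surgery on some $n$-component framed link $L \subset S^3$. Performing the component surgeries one at a time produces a sequence of closed orientable 3-manifolds whose consecutive terms differ by a single Dehn surgery, giving an edge path of length at most $n$ from $v_{S^3}$ to $v_M$ in $\thegraph$. Thus every vertex connects to $v_{S^3}$.

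For part (ii), I will exhibit infinitely many distinct neighbors of each vertex $v_M$ using hyperbolic Dehn surgery. By a theorem of Myers, every closed orientable 3-manifold $M$ contains a knot $K$ whose exterior $E = M \setminus \nu(K)$ admits a complete hyperbolic structure of finite volume. Thurston's hyperbolic Dehn surgery theorem then gives that all but finitely many slopes on $\partial E$ produce closed hyperbolic 3-manifolds with volumes lying strictly below $\mathrm{vol}(E)$ and converging to $\mathrm{vol}(E)$. Since only finitely many slopes can realize any fixed volume strictly below $\mathrm{vol}(E)$, and since volume is a genuine topological invariant by Mostow rigidity, infinitely many pairwise distinct 3-manifolds arise as surgeries on $K$, giving infinitely many distinct vertices adjacent to $v_M$.

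For part (iii), my plan is to use a manifold invariant whose one-step change along edges of $\thegraph$ is uniformly bounded but which can be made arbitrarily large, namely Heegaard genus $g$. The key claim is $|g(M)-g(M')| \le 1$ for every edge $v_Mv_{M'}$: given a knot $K \subset M$ producing $M'$, isotope $K$ into one handlebody of a minimal-genus Heegaard splitting of $M$, drill out a regular neighborhood of $K$ together with an unknotting tunnel arc running to the splitting surface, and observe that the result is a Heegaard splitting of $M \setminus \nu(K)$ of genus at most $g(M)+1$. Dehn filling produces a splitting of $M'$ of no greater genus. Exchanging the roles of $M$ and $M'$, using that Dehn surgery is involutive, gives the reverse inequality. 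Consequently the graph distance dominates $|g(M)-g(M')|$, and since $\#_n(S^1 \times S^2)$ has Heegaard genus exactly $n$, the distance from $v_{S^3}$ to $v_{\#_n(S^1 \times S^2)}$ is at least $n$, proving infinite diameter.

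The main technical obstacle is justifying the Heegaard-genus bound in (iii); one must verify that the drill-and-refill construction produces a genuine Heegaard splitting of the surgered manifold, and that standard splittings of $\#_n(S^1\times S^2)$ are genus-minimizing (Haken). Parts (i) and (ii) then reduce to citations of Lickorish--Wallace, Myers, and Thurston's hyperbolic Dehn surgery theorem.
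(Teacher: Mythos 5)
Your parts (i) and (ii) follow the paper's argument essentially verbatim: Lickorish--Wallace gives connectivity by surgering one link component at a time, and a Myers knot with hyperbolic exterior plus Thurston's hyperbolic Dehn surgery theorem (distinguishing the fillings by volume) gives infinite valence. Both are fine. The problem is part (iii): your key claim that $|g(M)-g(M')|\le 1$ across every edge of $\thegraph$, where $g$ is Heegaard genus, is false. The Poincar\'e homology sphere $P$ is obtained by $\pm 1$-surgery on a trefoil knot, so $v_P$ is adjacent to $v_{S^3}$, yet $g(P)=2$ while $g(S^3)=0$; more generally, surgeries on knots in $S^3$ with large tunnel number produce manifolds of arbitrarily large Heegaard genus, so there is no uniform per-edge bound at all. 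The flaw in your drill-and-refill construction is that after isotoping $K$ into a handlebody $H_1$ of a minimal genus splitting, $K$ may be knotted inside $H_1$; the complement in $H_1$ of $K$ together with a single tunnel arc is then in general not a compression body, so you do not obtain a Heegaard splitting of $M\setminus\nu(K)$ of genus $g(M)+1$. What the construction really yields is a splitting of the exterior adapted to $K$ whose genus exceeds $g(M)$ by roughly the tunnel number of $K$ in $M$, an unbounded quantity, and Dehn filling only bounds $g(M')$ by the genus of that (possibly far from minimal) splitting.

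The repair is to use an invariant that genuinely changes by at most one per edge. The paper uses the rank of $H_1(\,\cdot\,;\mathbb{R})$: if $M$ and $M'$ are two fillings of the same knot exterior $X$, then $H_1$ of each filling is $H_1(X)$ modulo one filling class, so each first Betti number is $b_1(X)$ or $b_1(X)-1$, whence $p_L(M,M')\ge |b_1(M)-b_1(M')|$. Since $b_1\bigl(\#_n(S^1\times S^2)\bigr)=n$, the diameter is infinite; conveniently these are exactly your witness manifolds, for which $b_1$ happens to equal the Heegaard genus, so the rest of your outline goes through unchanged once the invariant is swapped. (Alternatively, the mod-$p$ homology rank argument of Lemma \ref{lem:AlgebraicLemma} gives the same per-edge bound.)
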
 

 The graph $\thegraph$ is connected by the beautiful work of Lickorish \cite{LICKORISH} and Wallace \cite{WALLACE} who independently showed that all closed, orientable $3-$manifolds can be obtained by surgery along a link in $S^3$.  That every vertex $v_M$ in $\thegraph$ has infinite valence can be seen, amongst other ways, by constructing a hyperbolic knot $K$  in $M$ via the work of Myers in \cite{MYERSEXCELLENT}.  Then by work of Thurston \cite{THURSTONNOTES} all but finitely many fillings are hyperbolic, and the volumes of the filled manifolds approach the volume of the cusped manifold.  The graph \thegraph \ has infinite diameter since the rank of $H_1(M, \mathbb{R})$ can change by at most one via drilling and filling, and there are 3-manifolds with arbitrarily high rank. 
 
  The Lickorish proof explicitly constructs a link, and therefore allows us to describe a natural notion of distance. A shortest path from $v_{S^3}$ to $v_{M}$ in $\thegraph$ counts the minimum number of components needed for a link in $S^3$ to admit $M$ as a surgery.  We will refer to the number of edges in a shortest edge path between $v_{M_1}$ and $v_{M_2}$ as the \emph{Lickorish path length} and denote this function by $p_L(v_{M_1},v_{M_2})$.  For example, if $P$ denotes the Poincar\'e homology sphere, then $p_L(S^3, P \# P) =2$. See section \ref{sect:weightone} for more on $p_L$. Lickorish path length appears in the literature as surgery distance (see \cite{AUCKLY1}, \cite{ICHSAITO}). This gives us a metric on $\thegraph$, which we assume throughout the paper. 
  
The Big Dehn Surgery Graph is very big.  In order to get a handle on it, we will study some useful subgraphs.  
 We denote the subgraph of a graph generated by the vertices $\lbrace v_i \rbrace$ by $\langle \lbrace v_i \rbrace \rangle$.  
 The {\it link} of a vertex $v$ is the subgraph $lk(v) = \langle w: p_L(v,w) =1 \rangle $.  If there is an automorphism 
 of $\thegraph$ taking a vertex $v$ to a vertex $w$, then the links of $v$ and $w$ are isomorphic as 
 graphs.  We study the links of vertices and a possible characterization of the link of $S^3$ in 
 $\S$\ref{sect:links}.  Associated to any knot $K$ in a manifold $M$ is a $\KI$, the
  complete graph on infinitely many vertices (to distinguish between knots and complete graphs we will use $\KI$ and $\Kn$ to denote complete graphs).  In the case, that we want to describe a $\KI$ subgraph of $\thegraph$, we will use $M_\infty^{K} = \langle v_{M'}: M' = M(K;r) \rangle$.  See 
 $\S$\ref{sect:links} for a full set of notation conventions used in this paper. 

    Interestingly, not every $\KI$ arises this way.  We prove this in $\S$\ref{sect:kinfty} and 
    make some further observations about these subgraphs.  In $\S$\ref{sect:hyperbolic_graph} we study 
    the subgraph $\thehypgraph$.  The vertices of the subgraph $\thehypgraph$ are closed hyperbolic 3-manifolds and 
    there is an edge between two vertices $v_M$ and $v_N$ if there is a cusped hyperbolic 
    3-manifold with two fillings homeomorphic to $M$ and $N$.  We also study the geometry 
    of $\thegraph$ and $\thehypgraph$, showing that neither is $\delta$-hyperbolic in $\S$\ref{sect:flats}.  In $\S$\ref{sect:flats} we also construct flats of arbitrarily large dimension in $\thegraph$.  An infinite family of hyperbolic 3-manifolds with weight one fundamental group which are not obtained via surgery on a knot in $S^3$ is given in $\S$\ref{sect:weightone}.  This shows that a characterization of the vertices in the link of $S^3$ remains open. 
     Bounded subgraphs whose vertices correspond to other geometries are detailed in $\S$\ref{ThurstonGeoms}.
     \section{Acknowledgements} 
  Both authors have benefitted from many conversations with colleagues.  We would particularly like to thank Margaret Doig for suggesting that we look at the manifolds in \cite{GRWAT}.  We are also grateful to  Steven Boyer, Nathan Dunfield, Marc Lackenby, Tao Li, Luisa Paoluzzi, and Richard Webb.  The first author was partially supported by ARC Discovery Grant DP130103694 and the Max Planck Institute for Mathematics. The second author was partially supported through NSF Grant 1207644.

\section{The link of $S^3$}\label{sect:links}
We now set notation which we will use for the remainder of the paper.  A {\it slope} on the boundary of a 3-manifold $M$ is an isotopy class of unoriented, simple closed curves on $\partial M$. We denote the result of Dehn surgery on $M$ along a knot $K \subset M$ with filling slope $r$ by $M(K;r)$.    We denote Dehn filling along a link $L=K_1 \cup K_2... \cup K_n \subset M$ by $M(\lbrace K_1,...,K_n \rbrace;(r_1,...,r_n))$ or $M(L; (r_1,...,r_n))$, with a dash denoting an unfilled component. Thus the exterior of $K$ in $M$ is denoted $M(K;-)$ and the complement is denoted by $M\setminus K$. We will say that $M(K;-)$ or $M \setminus K$ is hyperbolic if its interior admits a complete hyperbolic metric of finite volume.   For knot and link exteriors in $S^3$ we will frame the boundary tori homologically, unless otherwise noted. 

Here we study the links of vertices in \thegraph, particularly the link of $S^3$.  As above, the link of a vertex in $\thegraph$ is the subgraph $lk(v) = \langle w: p_L(v,w) =1\rangle$.  If $v$ is associated to the manifold $M$, the vertices in this subgraph correspond to distinct manifolds which can be obtained via Dehn surgery on knots in $M$.    We refer to this subgraph as {\it the link of $M$ in $\thegraph$}, or just the link of $M$.

The link of $S^3$ in $\thegraph$ is connected.  There are several proofs of this fact.  Perhaps the most intuitive is to use that a crossing change on a knot in $S^3$ can be realized as a Dehn surgery along an unknotted circle, see   \cite{CROSSING}.   One must be careful to ensure that none of these surgeries results in $S^3$. 

The proof we give here arose from conversations with Luisa Pauoluzzi, and the path shows that the link of $S^3$ has bounded diameter. 

\begin{proposition} The link of $S^3$ in $\thegraph$ is connected and of bounded diameter. \label{prop:connect} 
\end{proposition}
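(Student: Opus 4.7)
The plan is to prove the result in two stages: first exhibit an infinite clique $\mathcal{L} \subset lk(S^3)$, and second show every vertex of $lk(S^3)$ is within bounded distance of $\mathcal{L}$. Combining these gives bounded diameter.

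For the clique, I would take $\mathcal{L}$ to be the non-trivial lens spaces. Every such $L(p,q)$ with $p \geq 2$ is realized as $S^3(U; p/q)$ for the unknot $U$, so $\mathcal{L} \subset lk(S^3)\setminus\{S^3\}$. Given any two $L = S^3(U; p/q)$ and $L' = S^3(U; p'/q')$ in $\mathcal{L}$, both are Dehn fillings of the solid torus $S^3 \setminus \nu(U)$. The dual knot $U^*\subset L$ has this solid torus as its exterior, so the appropriate surgery on $U^*$ yields $L'$; this gives an edge in $\mathcal{B}$ whose endpoints both lie in $lk(S^3)$, hence an edge in $lk(S^3)$. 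Thus $\mathcal{L}$ is an infinite clique, of $lk(S^3)$-diameter $1$.

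To show bounded distance from a general $v_M$ to $\mathcal{L}$, given $M = S^3(K;r)$ I would use the crossing-change principle cited above: a crossing change on $K$ is realized by $\pm 1$-Dehn surgery on an unknotted circle $C \subset S^3 \setminus \nu(K)$ with $lk(C,K) = 0$. In $S^3$ this surgery returns $S^3$ and replaces $K$ by $K'$, so the same slope change applied to the image $C^* \subset M$ produces $S^3(K'; r)$. As in the cited argument one checks the intermediate vertex is not $S^3$, so this is genuinely an edge in $lk(S^3)$ from $v_M$ to $v_{S^3(K';r)}$. Iterating this move with appropriately chosen $C$'s provides a path in $lk(S^3)$ whose endpoint is a surgery on the unknot, i.e.\ a lens space in $\mathcal{L}$.

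The main obstacle is bounding the length of this path uniformly in $K$. A sequential unknotting-by-crossing-changes argument produces a path of length $u(K)$, which is not bounded over all knots, so this crude strategy only reproves connectedness. The new contribution from conversations with Paoluzzi is to replace the crossing-change chain by a single bounded-length construction. Concretely, I would look for a universal auxiliary gadget---an unknotted circle or small link in $S^3 \setminus \nu(K)$, possibly used in combination with surgery on the dual knot $K^*\subset M$ (whose surgeries parametrize all fillings of $S^3\setminus K$)---such that a fixed number of edges in $lk(S^3)$ already lands inside $\mathcal{L}$, regardless of the complexity of $K$ or $r$. Producing and verifying such a universal construction is where I expect the real work of the proof to lie.
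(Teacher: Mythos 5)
Your first stage is fine and agrees with what the paper uses implicitly: the lens spaces (all surgeries on the unknot, equivalently all fillings of the solid torus) form a clique inside $lk(S^3)$. The problem is that your second stage is not a proof. As you yourself note, the crossing-change chain only yields a path of length comparable to the unknotting number of $K$, which is unbounded, so it reproves connectedness (modulo the usual care that no intermediate surgery returns $S^3$) but says nothing about diameter; and the ``universal auxiliary gadget'' that would give a uniform bound is exactly the content of the proposition, which you leave unconstructed. So the proposal has a genuine gap precisely at the point where the statement goes beyond connectedness.

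The missing idea in the paper is a cabling trick, using Gordon's computation of surgeries on cable knots \cite{GSAT}. Given a vertex $M=S^3(K;p/q)$ of $lk(S^3)$, let $CK$ be a cable of $K$; then for the cabling slope $pq$ one has $S^3(CK;pq)=S^3(K;p/q)\,\#\,L(p,q)$. This single manifold does all the work: it lies in $lk(S^3)$ because it is surgery on the knot $CK\subset S^3$; it is distance one from $M$, since adding the $L(p,q)$ summand is surgery on an unknot contained in a ball in $M$; and it is distance one from the lens space $L(p,q)$, since the other summand is recovered by surgery on a copy of $K$ contained in a ball in $L(p,q)$. (One only needs to check the relevant manifolds are distinct, e.g.\ that the lens space summand is nontrivial, which holds away from a few degenerate slopes that can be handled directly.) Hence every vertex of $lk(S^3)$ is within two edges, inside the link, of your lens-space clique, and bounded diameter follows at once --- this is the uniform, $K$-independent construction your proposal was searching for.
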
 
\begin{proof}  We show any surgery on a knot in $S^3$, $S^3(K;r)$, is at most distance three in the link from a lens space.  Let $CK$ denote a  cable of $K$.  Then there is a surgery slope $pq$ and a lens space $L(p,q)$ such that $S^3(CK;pq) = S^3(K;p/q) \# L(p,q)$, see \cite{GSAT}.  Thus $S^3(CK;pq)$ is distance one from a surgery along $K$ and distance one from a lens space. \end{proof}

One might hope to distinguish the links of vertices combinatorially in $\thegraph$.  For example, is the link of any vertex in \thegraph \  connected? of bounded diameter? 
A negative answer would lead to an obstruction to automorphisms of the graph that do not fix $S^3$.  More generally, an answer to the following question would lead to a better understanding of how the Dehn surgery structure of a manifold relates to the homeomorphism type. 

\begin{question} \label{q:aut} Does the graph $\thegraph$ admit a non-trivial automorphism?  
\end{question}  

 Given our results below in Section \ref{sect:weightone}, we do not know of a conjectured answer to the following problem, which amounts to characterizing manifolds obtained via surgery on a knot in $S^3$.

 \begin{problem} Characterize the vertices in the link of $S^3$.   \end{problem}

\section{Hyperbolic examples with weight one fundamental groups}\label{sect:weightone}
A group is {\it weight $n$} if it can be normally generated by $n$ elements and no normal generating set with fewer elements exists. Recall that all knot groups are weight one and hence all manifolds obtained by surgery along a knot in $S^3$ have weight one fundamental groups. It is a folklore question if a manifold which admits a geometric structure and has a weight one fundamental group can always be realized as surgery along a knot in $S^3$ (see \cite[Question 9.23]{AFW}).    The restriction to geometric manifolds is necessary since the fundamental group of $P^3 \# P^3$ is weight one, where $P^3$ is the Poincar\'e homology sphere.  This cannot be surgery along a knot in $S^3$ since if a reducible manifold is surgery along a non-trivial knot in $S^3$, one of the factors is a lens space \cite{GORDONLUECKE}.

In Theorem \ref{thm:OurNewExamples} we show that there are infinitely many hyperbolic 3-manifolds whose fundamental groups are normally generated by one element but which are not in the link of $S^3$ [Theorem \ref{thm:OurNewExamples}]. Our technique is a generalization to the hyperbolic setting of a method of Margaret Doig, who in \cite{DOIG} first came up with examples that could not be obtained via surgery on a knot in $S^3$ using the $d-$ invariant. Boyer and Lines \cite{BOYERLINES} exhibited a different set of small Seifert fibered spaces which are weight one but not surgery along a knot in $S^3$.

\begin{figure}
\centering
\subfigure[]{\resizebox{2.8 cm}{!}{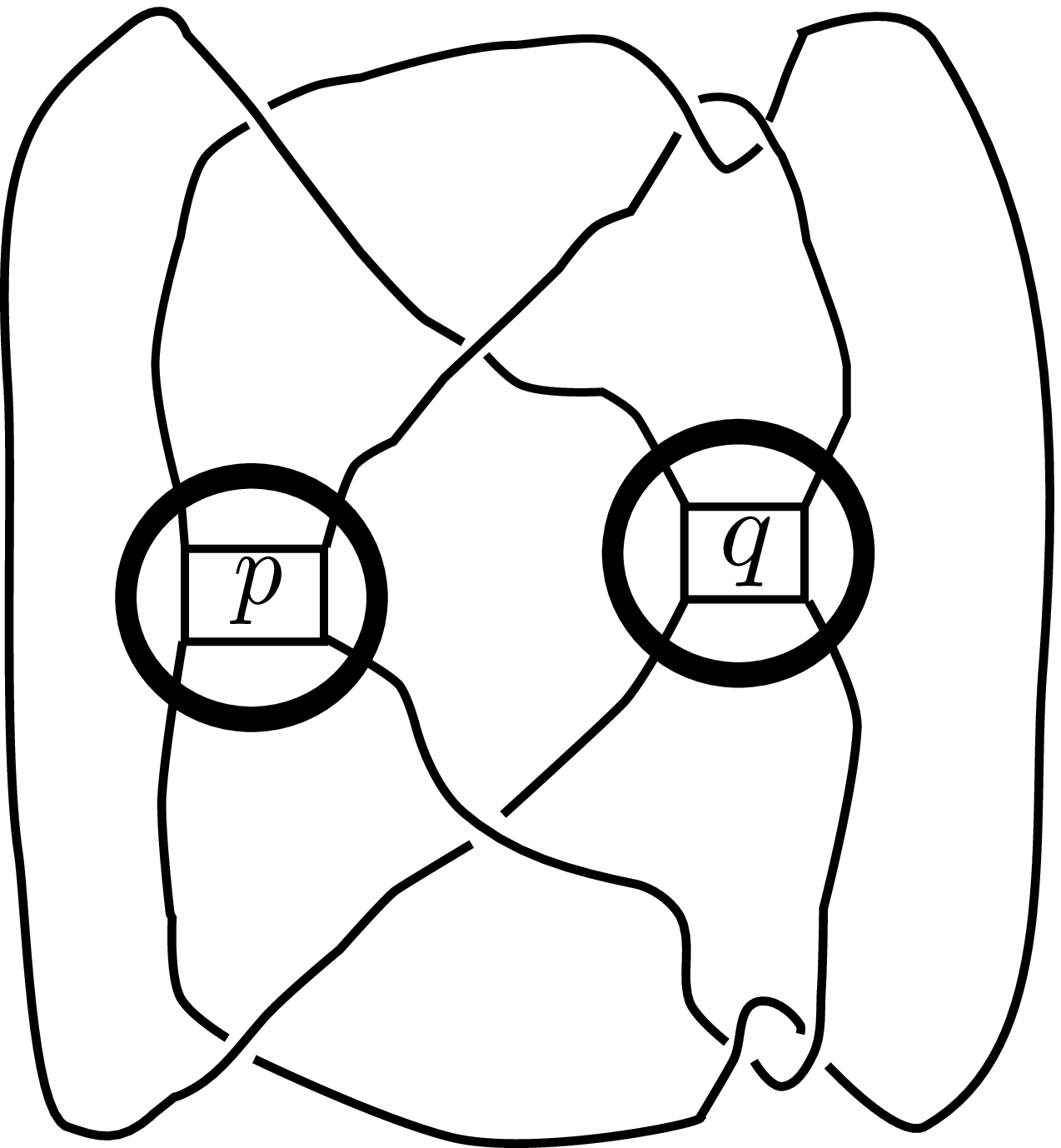}
\label{fig:KPQ}}
%
\subfigure[]{\resizebox{2.8 cm}{!}{\includegraphics{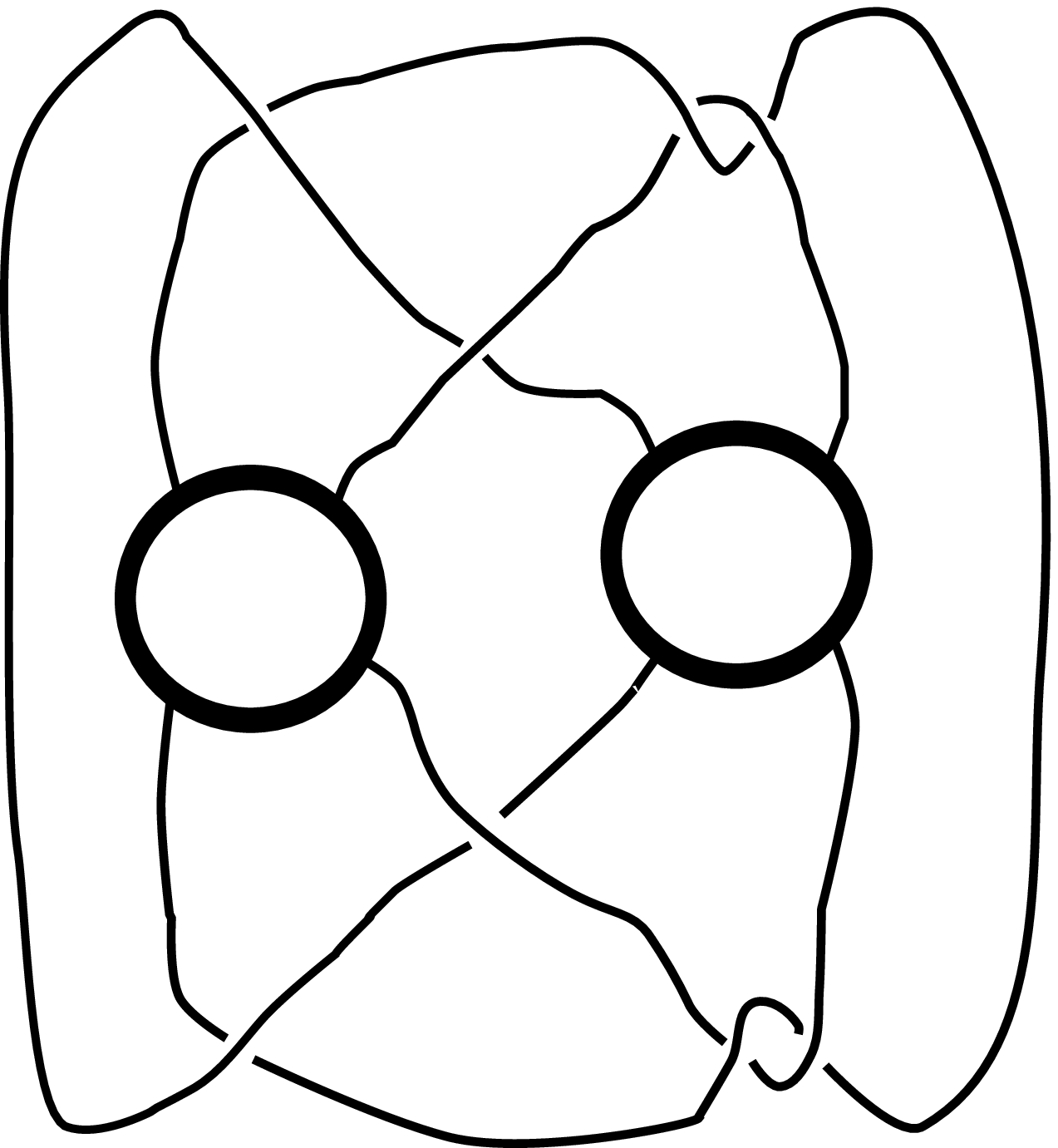}}
\label{fig:KnTwiceDrilledForHyperbolic}} 
\caption{The The Kanenobu knots $K_{p,q}$ (left) and the tangle obtain by drilling the $p$ and $q$ twist regions (right). }
\end{figure}

Before describing the hyperbolic examples, We make a few remarks regarding the weight one condition. We have the following obstruction to surgery due to James Howie:

\begin{theorem}\cite[Corollary 4.2]{HOWIE} Every one relator product of three cyclic groups is non-trivial. 
\end{theorem}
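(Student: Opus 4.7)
The plan is to show that $G := (C_1 * C_2 * C_3)/\langle\langle w \rangle\rangle$ is non-trivial for any non-trivial cyclic groups $C_i$ and word $w \in C_1 * C_2 * C_3$. I would split the argument into two stages, according to whether the abelianization already detects non-triviality.

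In the first stage, let $A := C_1 \oplus C_2 \oplus C_3$ and let $\bar w$ denote the image of $w$. The natural surjection $G \twoheadrightarrow A/\langle \bar w \rangle$ reduces the question to whether this abelian quotient is non-trivial, which fails precisely when $A$ is cyclic and $\bar w$ is a generator. A direct inspection shows that, with each $C_i$ non-trivial cyclic, the abelian group $A$ is itself cyclic exactly when all three $C_i$ are finite of pairwise coprime orders. In every other configuration (any infinite factor, or any pair of finite factors sharing a prime divisor), the abelianization of $G$ is non-trivial and we are done.

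The second stage handles the residual case of finite cyclic factors $C_i = \Z/n_i$ with $n_1, n_2, n_3$ pairwise coprime, where $\bar w$ may generate $\Z/(n_1 n_2 n_3)$ and thus the abelian invariant provides no information. My plan is to invoke Howie's tower machinery on the relative presentation $2$-complex $K$ built by wedging cone-type models for the three cyclic factors and attaching a single $2$-cell along $w$. Assuming $\pi_1(K) = 1$ for contradiction, the tower lifting theorem produces a reduced spherical or disc diagram whose boundary spells out $w$; the pairwise coprimality of the orders, combined with the cyclic reducedness of $w$, then constrains the way the $2$-cell boundaries can fit together and forces a cancellation that is incompatible with the syllable length of $w$.

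The principal obstacle is precisely the pairwise coprime case. The standard Freiheitssatz of Brodskii and Howie requires locally indicable factors, which finite cyclic groups emphatically are not, so one cannot simply embed a two-factor subproduct into $G$ to conclude non-triviality. In fact the two-factor analog of the theorem is false, as witnessed by $(C_2 * C_3)/\langle\langle x_1 x_2 \rangle\rangle = 1$. Thus the presence of the third cyclic factor must play an essential role in Howie's tower analysis, providing exactly the extra room in the disc diagram needed to preclude total collapse, and I expect this to be the technical heart of Corollary 4.2.
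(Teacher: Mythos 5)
Your first stage is correct: killing one element of $C_1\oplus C_2\oplus C_3$ can lower the minimal number of generators by at most one, so the abelianization of the one-relator product is already nontrivial unless all three factors are finite of pairwise coprime orders and $\bar w$ maps to a generator. Note, however, that the statement you are proving is quoted in the paper verbatim from Howie \cite[Corollary 4.2]{HOWIE}; the paper offers no proof to compare against, and the residual case you isolate in your second stage is not a technical loose end --- it is precisely the Scott--Wiegold conjecture, i.e.\ the entire content of Howie's theorem.

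That second stage is a genuine gap, not a proof. ``Howie's tower machinery'' (tower lifts, relative diagrams, Freiheitssatz-type arguments) is exactly the technology known \emph{not} to reach this case: it needs locally indicable factors or a relator that is a proper power, and you yourself point out that finite cyclic groups are not locally indicable. Having conceded that, you cannot then appeal to the same machinery and assert that pairwise coprimality ``forces a cancellation incompatible with the syllable length of $w$''; no such diagrammatic argument is known, and the conjecture stayed open for decades precisely because these combinatorial attacks fail. (Your own example $(\Z/2 * \Z/3)/\langle\langle x_1x_2\rangle\rangle = 1$ shows the two-factor statement is false, so any correct argument must exploit the third factor in a way your sketch never makes concrete.) Howie's actual proof goes by an entirely different route: he considers representations of $\Z/p * \Z/q * \Z/r$ into $SO(3)$ sending the generators to rotations through fixed angles about variable axes, and a continuity/intermediate-value argument over the configuration space of axes on the $2$-sphere produces a representation with nontrivial image under which $w$ maps to the identity, so the quotient is nontrivial. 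To complete your write-up you would need to reproduce that representation-theoretic argument (or simply cite Howie, as the paper does); the tower-and-diagram plan as sketched will not close the coprime case.
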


This implies, for example,  that $M\cong L(p_1,q_1)\#L(p_2,q_2)\#L(p_3,q_3)$,  is not obtained via surgery on a knot in $S^3$, since its fundamental group is not weight one.  However, when the $p_i$ are pairwise relatively prime, its homology is cyclic. 

The following proposition extends this consequence of Howie's result to hyperbolic manifolds.

\begin{proposition}\label{prop:WeightTwo} There are hyperbolic 3-manifolds $\{N_j\}$ with cyclic homology such that for each $j$, $\pi_1(N_j)$ is weight at least two. 
\end{proposition}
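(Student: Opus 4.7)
The plan is to construct the family $\{N_j\}$ as hyperbolic Dehn fillings of a nullhomotopic hyperbolic knot $K$ inside the reducible model $M = L(p_1,1) \# L(p_2,1) \# L(p_3,1)$, where $p_1, p_2, p_3$ are pairwise coprime integers, each at least $2$. This $M$ already has $H_1(M) = \Z/(p_1 p_2 p_3)$ cyclic and $\pi_1(M) = \Z/p_1 * \Z/p_2 * \Z/p_3$ of weight at least two by Howie's theorem; the goal is to replace $M$ by a hyperbolic manifold via surgery, while preserving both properties.

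By the Myers-type theorem \cite{MYERSEXCELLENT}, I expect to find a nullhomotopic knot $K \subset M$ whose exterior $W = M(K;-)$ is hyperbolic. Let $\mu$ be the meridian and $\lambda$ the preferred (Seifert) longitude, well-defined since $K$ is nullhomologous. Nullhomotopy of $K$ forces the image of $\lambda$ in $\pi_1(M) = \pi_1(W)/\langle\langle \mu\rangle\rangle$ to be trivial, so $\lambda \in \langle\langle \mu\rangle\rangle \leq \pi_1(W)$; nullhomology gives $\lambda = 0$ in $H_1(W)$, and a Mayer--Vietoris computation yields $H_1(W) \cong \Z\langle\mu\rangle \oplus \Z/(p_1 p_2 p_3)$.

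For each pair $(p,q)$ of coprime nonzero integers with $\gcd(p, p_1p_2p_3) = 1$, I would set $N_{p,q} = M(K; p\mu + q\lambda)$. Thurston's hyperbolic Dehn surgery theorem handles hyperbolicity for all but finitely many such slopes. A direct computation gives $H_1(N_{p,q}) = H_1(W)/\langle p\mu\rangle \cong \Z/(p\cdot p_1 p_2 p_3)$, cyclic by coprimality. On the fundamental group side, the filling element $\mu^p \lambda^q$ lies in $\langle\langle \mu\rangle\rangle$, yielding a surjection $\pi_1(N_{p,q}) \twoheadrightarrow \pi_1(W)/\langle\langle \mu\rangle\rangle = \pi_1(M) = \Z/p_1 * \Z/p_2 * \Z/p_3$. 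Since weight cannot increase under surjections and Howie's theorem forces weight $\pi_1(M) \geq 2$, the same lower bound holds for $\pi_1(N_{p,q})$. Letting $p$ range over infinitely many values produces pairwise distinct manifolds, distinguished by the order of $H_1$.

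The main technical obstacle will be supplying the nullhomotopic hyperbolic knot $K$ in the reducible manifold $M$: the knot must meet every reducing sphere of $M$ (else its complement is reducible) while remaining nullhomotopic, and its complement must also be atoroidal, anannular and not Seifert-fibered. Since $M$ has three genuine lens space summands (in particular is not of the form $L \# (\#^k S^2 \times S^1)$), an excellent-knot construction in Myers's spirit should deliver such a $K$ in the trivial homotopy class, and Thurston's geometrization then gives the hyperbolic structure on $W$ used above.
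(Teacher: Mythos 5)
Your proposal is correct and follows essentially the same route as the paper: a nullhomotopic knot $K$ with hyperbolic exterior in $L(p_1,q_1)\#L(p_2,q_2)\#L(p_3,q_3)$ provided by Myers, the observation that $\lambda\in\langle\langle\mu\rangle\rangle$ so every filling along a slope $\mu^p\lambda^q$ surjects onto $\pi_1(M)$, Howie's theorem to get weight at least two, and Thurston's hyperbolic Dehn surgery theorem for hyperbolicity. The only cosmetic difference is your choice of slopes $p\mu+q\lambda$ with $\gcd(p,p_1p_2p_3)=1$ (distinguishing fillings by $|H_1|$), where the paper fixes $\mu\lambda^j$ so that $H_1$ is cyclic of order $p_1p_2p_3$ for all $j$.
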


\begin{proof}
Just as above, $M\cong L(p_1,q_1)\#L(p_2,q_2)\#L(p_3,q_3)$ with all the $p_i$ pairwise relatively prime. By  \cite[Theorem 1.1]{MYERSEXCELLENT}, 
there exists a knot $K \subset M$ such that $K$ bounds an immersed disk in $M$ and $M-K$ is hyperbolic. Denote 
by $\Gamma_K=\pi_1(M(K,-))$ and $\pi_1(\partial(M(K;-)))=\langle\mu, \lambda | [\mu,\lambda]=1\rangle$, 
where $\mu$ and $\lambda$ are chosen such that $M(K;\mu)=M$, and $\lambda$ bounds 
an immersed disk when considered as a curve in $M$. 

Let $\langle\langle g \rangle \rangle_{G}$ denote the normal closure of $g$ in $G$. If $\gamma$ is a curve in $\partial(M(K;-))$ representing the isotopy class $\mu^r\lambda^s$, then  
$\pi_1(M(K;\gamma))=\Gamma_K/\langle\langle \mu^r\lambda^s \rangle \rangle_{\Gamma_K}$. Observe 
that $\Gamma_K/\langle\langle \mu^r\lambda^s, \mu \rangle \rangle_{\Gamma_K} 
=\Gamma_K/\langle\langle \mu \rangle \rangle_{\Gamma_K}
= \pi_1(M)$ as $\lambda \in \langle\langle \mu \rangle \rangle_{\Gamma_K}$ since $\lambda$ bounds 
an immersed disk in $M$.  Thus, there exists a 
surjective homomorphism $f: \pi_1(M(K;\gamma)) \rightarrow \pi_1(M)$. In particular, $\pi_1(M(K;\gamma))$  is 
weight at least two. 

 If we let $N_j = M(K;\mu\lambda^j)$ then $H_1(N_j,Z)$ is cyclic of order $p_1p_2p_3$ and by Thurston's Hyperbolic Dehn Surgery Theorem \cite[Theorem 5.8.2]{THURSTONNOTES}, $N_j$ is hyperbolic for sufficiently large $j$.  \end{proof}

%
%
%

Over the two papers {\cite{AUCKLY2,AUCKLY1}}, Dave Auckly exhibited hyperbolic integral homology spheres that could not be surgery along a knot in $S^3$. However, it is unknown if these examples have weight one since his construction involves a 4-dimensional cobordism  that preserves homology, but not necessarily group weight.  

Margaret Doig has recently exhibited examples of manifolds admitting a Thurston geometry, but which cannot be obtained by surgery along a knot in $S^3$.

\begin{theorem}\cite[Theorem 2(c)]{DOIG}
Of the infinite family of elliptic manifolds with $H_1(Y) = \mathbb{Z}_4$, only one (up to orientation preserving homeomorphism) can be realized as surgery on a knot in $S^3$, and that is $S_4^3(T_{2,3})$.  
\end{theorem}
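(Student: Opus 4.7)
The plan is to use the Heegaard Floer correction terms ($d$-invariants) as an obstruction, following the approach Doig introduces in \cite{DOIG}. These invariants give a finely $\mathrm{Spin}^c$-graded rational-valued obstruction to a rational homology 3-sphere being realized as surgery on a knot in $S^3$, and they are especially effective for manifolds with small first homology such as $\mathbb{Z}_4$.

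First I would enumerate the family explicitly. An elliptic 3-manifold with cyclic first homology $\mathbb{Z}_4$ is either one of the lens spaces $L(4,\pm 1)$ or a small Seifert fibered space over the 2-sphere with three exceptional fibers whose Seifert invariants are parametrized by a single integer, subject to the spherical-geometry condition together with the requirement that the first homology be $\mathbb{Z}_4$. For each member of this one-parameter family, there is a standard negative-definite plumbing presentation, and I would compute the tuple of $d$-invariants $(d(Y, \mathfrak{s}_0), \ldots, d(Y, \mathfrak{s}_3))$ using the Ozsv\'ath--Szab\'o plumbing algorithm, or equivalently the closed-form expressions of N\'emethi. This produces a concrete list of four-tuples indexed by the parameter.

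Next I would apply the surgery obstruction. By a theorem of Ozsv\'ath--Szab\'o, if $Y = S^3_{p/q}(K)$ for some knot $K \subset S^3$, then there is a sequence of non-negative integers $V_i$ coming from the knot Floer complex of $K$, satisfying the monotonicity $V_i \geq V_{i+1} \geq V_i - 1$ and $V_i = 0$ for $i$ sufficiently large, such that each $d(Y, \mathfrak{s}_i)$ equals $d(L(p,q), \mathfrak{s}_i)$ minus an explicit non-negative even correction $2V_{j(i)}$. For $p = 4$ there are only two lens spaces up to orientation-preserving homeomorphism, and the $d$-invariants of $L(4,q)$ are well known, so this template forces the tuple $(d(Y, \mathfrak{s}_0), \ldots, d(Y, \mathfrak{s}_3))$ of any genuine 4-surgery to lie in a very restricted subset of $\mathbb{Q}^4$.

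The final step is to cross-reference the two lists. Only one tuple appears in both, and it is realized by $S^3_4(T_{2,3})$, i.e.\ $+4$-surgery on the right-handed trefoil; one verifies directly that this manifold is indeed a member of the parametrized Seifert family. The main obstacle is precisely this last case analysis: for each admissible residue $q$ and each monotone sequence $\{V_i\}$ admissible under the obstruction, one must rule out every member of the elliptic family except $S^3_4(T_{2,3})$. This requires combining the monotonicity of $\{V_i\}$ with the explicit arithmetic of $d(L(4,q), \mathfrak{s}_i)$ and with the closed-form $d$-invariants of the Seifert family, a finite but delicate numerical comparison that is the technical heart of Doig's argument.
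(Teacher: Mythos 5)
This statement is quoted verbatim from Doig's paper \cite{DOIG}; the present paper supplies no proof of its own, so the only meaningful comparison is with Doig's original argument. Your outline --- enumerate the elliptic manifolds with $H_1(Y) \cong \mathbb{Z}_4$ (the infinite family being the relevant prism-type Seifert fibrations), compute their $d$-invariants, and play these off against the constraints that the Ozsv\'ath--Szab\'o knot surgery formula and the correction terms of $L(4,q)$ impose on any $S^3_{4/q}(K)$ --- is essentially Doig's $d$-invariant obstruction strategy, so up to the deferred (and genuinely technical) numerical case analysis it is the same approach.
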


Although not explicitly stated in her result, for a finite group $G$, the weight of $G$ is determined by the weight $G/G'$ (see \cite{KUTZKO}), and so the above elliptic manifolds have weight one fundamental groups.

Using similar techniques and the work of Greene and Watson in \cite{GRWAT}, we are able to exhibit hyperbolic manifolds that have weight one fundamental groups but are never surgery along a knot in $S^3$. As in Greene and Watson, our examples are the double branched covers of the knots $K_{p,q}$ (see Figure \ref{fig:KPQ}) where $p=-10n$, $q=10n+3$, and $n\geq 1$. We denote these knots by $K_n$ and their corresponding double branched covers by $M_n$. The techniques of the proof may require us to omit finitely many of these double branched covers from the statement of the theorem. We will use $\lbrace X_n \rbrace$ to denote the manifolds in this (possibly) pared down set.  


\begin{theorem}\label{thm:OurNewExamples} There is an infinite family of hyperbolic manifolds, $\lbrace X_n \rbrace$, none of which can be realized as surgery on a knot in $S^3$.  Furthermore, these manifolds have weight one fundamental groups.
\end{theorem}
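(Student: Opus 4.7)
The plan is to take $M_n = \Sigma_2(K_n)$, the double branched cover of the Kanenobu knot $K_n = K_{-10n,\,10n+3}$, for $n \geq 1$, and to let $\{X_n\}$ be the cofinite subfamily of $\{M_n\}$ for which all three conclusions simultaneously hold. Three things must be verified: (i) $M_n$ is hyperbolic, (ii) $\pi_1(M_n)$ is normally generated by a single element, and (iii) $M_n$ is not obtained by Dehn surgery on any knot in $S^3$. Property (iii) is the crux, handled by Heegaard Floer $d$-invariants in the style of Doig \cite{DOIG}.

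For (i), I would begin with the two-cusped manifold $Y$ obtained as the double cover of $S^3$ branched over the tangle of Figure~\ref{fig:KnTwiceDrilledForHyperbolic}, namely the tangle obtained from $K_n$ by drilling out the two twist regions. The manifold $Y$ is independent of $n$, and the fillings of its two cusps that recover the $-10n$ and $10n+3$ twists are Dehn fillings along slopes $s_p(n)$ and $s_q(n)$ whose slope lengths on $\partial Y$ grow linearly with $n$. I would verify once, by explicit computation (for instance via SnapPy), that $Y$ is hyperbolic; Thurston's hyperbolic Dehn surgery theorem \cite{THURSTONNOTES} then gives a finite exceptional set of slopes on each cusp, outside of which $M_n$ is hyperbolic. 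This discards at most finitely many values of $n$. For (ii), I would use the surgery description of $M_n$ coming from this branched-cover picture to show that a single meridian normally generates $\pi_1(M_n)$, via a group-theoretic argument analogous to the one employed in the proof of Proposition~\ref{prop:WeightTwo}; concretely, one exhibits a presentation in which the extra generators lie in the normal closure of one distinguished element coming from a lifted meridian of $K_n$.

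The principal obstacle is (iii). Here I would follow Doig's approach \cite{DOIG}, made possible by the fact, established in \cite{GRWAT}, that every $M_n$ is a Heegaard Floer L-space. For an L-space the correction terms $d(M_n,\mathfrak{s})$ are computable directly from a surgery presentation; meanwhile, the Ozsv\'ath--Szab\'o surgery formula imposes rigid symmetry and monotonicity constraints on the full profile of $d$-invariants of any manifold realized as integer surgery on a knot in $S^3$ with fixed first homology. The plan is to compute the $d$-invariant profile of $M_n$ from its branched-cover presentation and then to show that this profile violates those constraints for all sufficiently large $n$. The delicate point is not any individual calculation but the uniformity of the obstruction across the infinite family $\{M_n\}$, which is also what forces the passage to the cofinite subfamily $\{X_n\}$ in the statement.
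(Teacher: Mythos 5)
Your outline follows the paper's architecture: the same family $M_n = \Sigma_2(K_{-10n,10n+3})$, hyperbolicity for all but finitely many $n$ by verifying (computationally) that the double cover of the drilled tangle is hyperbolic and invoking Thurston's Dehn surgery theorem, and weight one via an explicit presentation in which killing one generator trivializes the group (the paper does this with the four-generator presentation from \cite{GRWAT}, where setting $a_1=1$ forces $a_2^{10n-1}=a_2^{10n-6}=1$). Those parts are fine as a plan, though the weight-one step ultimately needs the concrete presentation, not an analogy with Proposition \ref{prop:WeightTwo} (which proves weight \emph{at least two} for different manifolds).

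The genuine gap is in (iii), which you correctly identify as the crux but leave without its engine. Saying that the surgery formula imposes ``symmetry and monotonicity constraints'' on the $d$-invariant profile and that one should ``show that this profile violates those constraints for all sufficiently large $n$'' restates the goal; it does not say \emph{which} quantitative feature of the $M_n$ is incompatible with being a surgery, nor why it degenerates with $n$. The paper's mechanism is a boundedness-versus-unboundedness clash: on one side, Greene--Watson prove $\min_i \tau(M_n,i)\to-\infty$ while the Casson--Walker invariants are constant across the family (Mullins' formula \cite{MULLINS}, ribbon knots with identical Jones polynomials), so by $d(M_n,i)=2\tau(M_n,i)-\lambda$ the correction terms satisfy (\ref{unboundedcorrection}), i.e.\ are unbounded below; on the other side, if $M_n=S^3_{p/q}(K)$ then $H_1(M_n)=\Z/25\Z$ forces $p=25$ only, the L-space condition gives $25/q\ge 2g(K)-1$ with $K$ fibered and $g(K)=\deg\Delta_K$ \cite{OZSZINT,OZSZRAT,JUHASZ,NIFIBERED,OZSZGENUS}, Theorem \ref{alternating} forces the coefficients of $\Delta_K$ to alternate $\pm1$, and hence the right-hand side of (\ref{boundedcorrection}) is uniformly bounded, while $d(S^3_{p/q}(U),i)$ takes only finitely many values since there are finitely many $L(25,q)$. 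That uniform bound is what contradicts (\ref{unboundedcorrection}) and is exactly the ``uniformity across the family'' you flag as delicate; without importing the Turaev torsion/Casson--Walker computation of \cite{GRWAT} (or some substitute), your plan has no way to produce it. A secondary but real issue: you restrict to \emph{integer} surgery, but homology only pins down $p=25$, not $q=1$, so any argument must exclude all rational $25/q$ surgeries, which is precisely why the paper works with the rational surgery correction-term formula of \cite{OZSZRAT} and needs the genus bound coming from the L-space condition to control $q$.
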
 

In the following proof, we require two standard definitions from Heegaard-Floer homology
(see \cite{OZSZ4D}, \cite{DOIG}). First, a rational homology sphere $M$ is an \emph{L-space} if the hat version of its 
Heegaard Floer homology is as simple as possible, 
namely for each Spin$^c$ structure $t$ of $M$, the hat version of $\widehat{HF}(M,t)$ 
has a single generator and no cancelation.
The \emph{d-invariant}, $d(M,t)$ is an invariant assigned to each  Spin$^c$ structure $t$ of $M$ 
is the minimal degree of any non-torsion class of $HF^+(M,t)$ coming from $HF^\infty(M,t)$. 
Crudely, the d-invariant can be thought of as a way of measuring how far from $S^3$ a manifold is. 
This mentality is motivated by the argument in the proof below. 

\begin{proof}  For this proof, we use notation from \cite{OZSZRAT}.  As noted above, Greene and Watson \cite{GRWAT} study the family of knots $\lbrace K_n \rbrace$ and their double branched covers $M_n$.  The manifolds $M_n$ have the following properties: 
 
 Each $M_n$ is an $L$-space (\cite[Proposition 11]{GRWAT}).

The d-invariant, defined in \cite{OZSZ4D} of the $M_n$, satisfies the following relation: 
\begin{equation} d(M_n, i) = 2 \tau(M_n, i) - \lambda\end{equation} 

for all $n \geq 0$ and all $i \in Spin^c(M_n)$.  Here $\tau(M_n,i)$ is the Turaev torsion and $\lambda = \lambda(M_n)$ is the Casson-Walker invariant.  That the Casson-Walker invariants are all identical follows from the work of Mulllins \cite[Theorem 7.1]{MULLINS} and that the knots are ribbon and have identical Jones polynomials \cite[Propositions 8 and 11]{GRWAT}. Furthermore,  by \cite[Proposition 14]{GRWAT},
 \begin{equation} \lim_{n \rightarrow \infty} min \lbrace \tau(M_n, i) |i \in Spin^c(M_n) \rbrace = -\infty. \end{equation}

As they observe, (1) and (2) above imply:

\begin{equation} \label{unboundedcorrection} 
\lim_{n \rightarrow \infty} min \lbrace d(M_n, i) |i \in Spin^c(M_n) \rbrace = -\infty.
\end{equation} 
 Since the manifolds $M_n$ are $L$-spaces, we may apply: 
 
 \begin{theorem} \label{alternating}  \cite[Theorem 1.2]{OZSZ4D} If a knot $K \subset S^3$ admits an $L$-space surgery, then the non-zero coefficients of $\Delta_K(T)$ are alternating $+1$s and $-1$s. 
 \end{theorem}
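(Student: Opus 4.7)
The plan is to outline the Ozsváth–Szabó argument, which relates the $L$-space condition on a surgery to the structure of the knot Floer complex $CFK^\infty(K)$, and then extracts the Alexander polynomial as a graded Euler characteristic.

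First I would reduce to large positive integer surgeries. If $S^3_{p/q}(K)$ is an $L$-space, then using that the set of $L$-space surgery slopes on a fixed knot forms an interval in $\mathbb{Q}\cup\{\infty\}$ (together with the fact that an $L$-space surgery forces $p/q>0$ when it exists), one deduces $S^3_N(K)$ is an $L$-space for every integer $N$ sufficiently large, say $N\geq 2g(K)-1$.

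Second I would invoke the large surgery formula: for $N\geq 2g(K)-1$ and each $\mathrm{Spin}^c$ structure $[i]\in \mathbb{Z}/N$,
\[
\widehat{HF}\bigl(S^3_N(K),[i]\bigr)\;\cong\;H_*(\widehat{A}_i),
\]
where $\widehat{A}_i$ is the subquotient of $CFK^\infty(K)$ supported on $\{\max(j,\,k-i)=0\}$. The $L$-space hypothesis forces $\widehat{HF}(S^3_N(K),[i])\cong \mathbb{Z}$, concentrated in a single Maslov grading, and hence the same for each $H_*(\widehat{A}_i)$.

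Third, and this is the main structural step, I would deduce that $CFK^\infty(K)$ has a \emph{staircase} form. The condition that every $\widehat{A}_i$ has rank-one homology severely constrains how horizontal and vertical differentials can interact with the Alexander and Maslov bigrading. A careful induction on Alexander grading, using the symmetry coming from the conjugation $\mathrm{Spin}^c$-involution, shows that up to chain-homotopy equivalence $CFK^\infty(K)$ is generated by a zig-zag of $2k+1$ generators $x_0,x_1,\ldots,x_{2k}$ with strictly decreasing Alexander gradings $n_0>n_1>\cdots>n_{2k}$ and alternately horizontal/vertical length-one differentials, whose Maslov gradings differ by one at each step. From this staircase one reads off $\widehat{HFK}(K,n_j)\cong \mathbb{Z}$ in Maslov gradings that alternate in parity with $j$, and $\widehat{HFK}(K,s)=0$ for $s\notin\{n_0,\ldots,n_{2k}\}$.

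Finally, since the Alexander polynomial is the graded Euler characteristic of knot Floer homology,
\[
\Delta_K(T)\;=\;\sum_{s}\chi\bigl(\widehat{HFK}(K,s)\bigr)\,T^{\,s}\;=\;\sum_{j=0}^{2k}(-1)^{j}\,T^{\,n_j},
\]
so the non-zero coefficients of $\Delta_K(T)$ are $\pm 1$ with alternating signs, as claimed. I expect the staircase deduction in the third step to be the main obstacle: translating ``each $\widehat{A}_i$ has homology $\mathbb{Z}$'' into the precise zig-zag structure of $CFK^\infty$ is the heart of the Ozsváth–Szabó argument and requires careful chain-level bookkeeping using the $U$-action and the filtration symmetries; once this is in hand, the alternating $\pm 1$ conclusion is a purely combinatorial consequence.
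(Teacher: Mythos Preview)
The paper does not prove this statement at all: it is quoted verbatim with a citation to Ozsv\'ath and Szab\'o and then used as a black box in the proof of Theorem~4.3. There is therefore no ``paper's own proof'' to compare against.

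That said, your outline is a faithful sketch of the Ozsv\'ath--Szab\'o argument. The reduction to large integral surgeries, the large-surgery identification $\widehat{HF}(S^3_N(K),[i])\cong H_*(\widehat{A}_i)$, the deduction that each $\widehat{HFK}(K,s)$ has rank at most one with Maslov gradings forced into an alternating pattern, and the final Euler-characteristic reading are exactly the steps in the original proof. One small historical remark: the explicit ``staircase'' description of $CFK^\infty(K)$ you invoke in step three is a later reformulation; the original argument works more directly with the groups $\widehat{HFK}(K,s)$ and their Maslov gradings rather than passing through a chain-level model. The content is the same, and you are right that this structural step is where all the work lies.
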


 Furthermore, if a knot surgery $S^3_{p/q}(K)$ with $p/q \geq 0$ is an $L$-space, it is shown in \cite[Theorem 1.2]{OZSZRAT} that the correction terms $d(S^3_{p/q}, i)$ may be calculated as follows, for $|i|\leq p/2$,  and $c = |\lfloor i/q \rfloor|$:
 
 \begin{equation} \label{boundedcorrection} d(S^3_{p/q}(K),i) -d(S^3_{p/q}(U),i) = -2 \sum_{j=1}^\infty ja_{c+j}\end{equation} 
 
 where $a_i$ is defined in terms of  the normalized Alexander polynomial of $K$: 
 
 $$\Delta_K(T) = a_0 + \sum_{i=1}^n a_i(T^i + T^{-1}).$$
 
 
 
Again, we are using notation from \cite{OZSZRAT} and in particular $S^3_{p/q}(K) = S^3(K;p/q)$.  We note that Greene and Watson \cite{GRWAT} establish that $H_1(M_n) = \mathbb{Z}/25\mathbb{Z}$. By homology considerations, if any $M_n$ is $p/q$ surgery on a (standard positively framed) knot K in $S^3$, then $p=25$.  The L-space condition implies $\frac{25}{q} \geq 2g(K)-1$ by \cite{OZSZINT,OZSZRAT} (in particular $q>0$). We also know that such a $K$ is fibered by \cite{JUHASZ,NIFIBERED} and that $g(K)$ is the degree of the symmetrized Alexander polynomial of $K$ by \cite{OZSZGENUS}, bounding the number of  terms on the right hand side of Equation (\ref{boundedcorrection}).
  
   In addition, since $M_n$ is an L-space, if $M_n = S^3_{p/q}(K)$, the Alexander polynomials of such a $K$ have bounded coefficients by Theorem \ref{alternating}. Thus, the right hand side of Equation (\ref{boundedcorrection}) is bounded and since there are only finitely many $L(p,q)$ with $p=25$, $d(S^3_{p/q}(U),i)$ only can take on finitely many values. Therefore, $d(S^3_{p/q}(K),i)$ is bounded. However, this contradicts the limit (\ref{unboundedcorrection}), and so at most finitely many of the $M_n$ can be surgery on any knot.

Next, we establish that all but at most finitely many $\{ M_n \}$ are hyperbolic.

Indeed, the Kanenobu knots $K_n$ are all obtained by tangle filling the two boundary components of the tangle $T$ in Figure \ref{fig:KnTwiceDrilledForHyperbolic}, and so the manifolds $\{ M_n \}$ are obtained by Dehn filling the double cover of $T$, which we denote by $M$.  
A triangulation for $M$ can be obtained by inputting $T$ labeled with cone angle $\frac{\pi}{2}$ into the computer software 
Orb (an orbifold version of the original Snappea) \cite{ORB} to obtain an orbifold structure $Q$.\footnote{The file and instructions on how to use it are available on the arXiv version of this paper.}
Denote by $M$,
the double cover of $Q$ corresponding to the unique index 2 torsion free subgroup 
$\pi_1^{orb}(Q)$. This computation shows that $M$ decomposes into 8 tetrahedra. 
In fact, SnapPy's identify function \cite{SNAPPY} shows $M$ is homeomorphic to `t12060' in the 8 tetrahedral census.  Also, using Snappy,  
 a set of 8 gluing equations for $M$ are encoded by the following matrix:

$$\left( \begin{array}{rrrrrrrrrrrrrrrr|r} 
 2  & -2  & 2  & 1  & 2  & -1  & 1  & -1  & 2  & 0  & 0  & 2  & -1  & 1  & 2  & 0  & -4 \\
  0  & 0  & 0  & 0  & 0  & 0  & 1  & -1  & 0  & 0  & 0  & 0  & -1  & 1  & 0  & 0  & 0 \\
 -1  & 2  & 0  & -1  & 0  & -1  & 0  & 0  & -2  & 1  & 0  & 0  & 0  & 0  & 0  & 0  & 2 \\
  0  & -2  & -1  & 1  & -1  & 1  & 0  & -1  & 0  & 0  & 0  & 0  & 0  & -1  & 0  & 0  & 0 \\
  1  & 0  & 1  & 0  & 1  & 0  & 0  & 0  & 0  & 0  & 1  & 0  & 0  & 0  & 1  & 0  & -2 \\
  0  & 0  & 1  & 0  & 1  & 0  & 1  & 0  & 2  & 0  & 0  & 0  & 1  & 0  & 0  & 0  & -2 \\
  0  & 0  & 0  & -1  & 0  & -1  & -1  & 1  & 0  & 0  & -1  & 1  & -1  & 1  & -1  & 1  & 2 \\
  0  & 0  & 0  & 0  & 0  & 0  & -1  & 1  & 0  & 0  & 1  & -1  & 1  & -1  & -1  & 1  & 0  \end{array} \right)$$ 
  
The coding is as follows given a row $ \begin{pmatrix} a_1 & b_1 & a_2 &  b_2 & ... & a_8 & b_8 &| c \end{pmatrix}$, we produce a log equation
$a_1 \log (z_1) + b_1 \log(1-z_1)+ .. +a_8 \log (z_8) + b_8 \log(1-z_8)- c \pi \cdot i = 0$. Given such an encoding $z = (2 i, \frac{1}{5} + \frac{3i}{5}, \frac{1}{5} + \frac{3i}{5}, \frac{1}{2} + \frac{i}{2}, 1  + 2i, \frac{1}{2} + i, \frac{1}{2} + \frac{i}{2}, \frac{1}{2}  + i)$ is an exact solution and therefore M and `t12060' admit a complete hyperbolic structure. By Thurston's Hyperbolic Dehn Surgery Theorem \cite[Theorem 5.8.2]{THURSTONNOTES}, the manifolds $M_n$ limit to $M$. Thus, there are at most finitely many non-hyperbolic $M_n$.

 We have that at most finitely many of the $M_n$ are surgery on a knot and at most finitely many are non-hyperbolic.  We denote the subsequence of $M_n$ that are hyperbolic and cannot be surgery along a knot by $X_n$. 
 
 Finally, we establish that $\pi_1(M_n)$ is weight one and therefore $\pi_1(X_n)$ is weight one.  As noted in \cite[$\S$4.2]{GRWAT},
 
\begin{tabular}{cl}
$ \pi_1(M_n)=\langle a_1,a_2,a_3,a_4 | b_1,b_2,b_3,b_4\rangle$ with & $b_1=(a_1^{-1} a_2)^{10n}a_4^{-1}a_1^2$,\\
& $b_2=a_2^{-1}a_3(a_2^{-1}a_1)^{10n}a_2^{-1}$,\\ 
 & $b_3=(a_4^{-1}a_3)^{10n+3}a_3^{-1}a_2 a_3^{-2}$, and\\
 & $b_4=a_1^{-1} a_4 (a_3^{-1}a_4)^{10n+3}a_4^2$.\\
\end{tabular}\\

We claim $\pi_1(M_n)/ \langle \langle a_1 \rangle\rangle_{\pi_1(M_n)}$ is trivial. 
First, the relations  $b_1$, $b_2$ become $a_2^{10n}=a_4$
and $a_2^{10n+2}=a_3$, respectively. Also, the relations $b_3$ and $b_4$ reduce to
$a_2^{10n-1}=1$ and $a_2^{10n-6}=1$ respectively. The claim follows as $gcd(10n-1,10n-6)=1$.\end{proof}

\begin{corollary}
For all $n$, $p_L(M_n,S^3)\leq 2$ and for all but at most finitely many $n$, $p_L(M_n,S^3)=2$.  
\end{corollary}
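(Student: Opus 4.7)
The statement has two parts. The lower bound $p_L(M_n, S^3) \geq 2$ for all but finitely many $n$ is immediate from Theorem \ref{thm:OurNewExamples}: for such $n$, the manifold $M_n = X_n$ is not obtained from $S^3$ by surgery on a single knot, so its Lickorish distance from $S^3$ is at least $2$. It remains to prove the upper bound $p_L(M_n, S^3) \leq 2$ for every $n$.

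The plan is to exhibit a single $2$-component link $L \subset S^3$ such that every $M_n$ is realized as Dehn surgery on $L$. The proof of Theorem \ref{thm:OurNewExamples} already shows that each $M_n$ is a Dehn filling of the two-cusped hyperbolic manifold $M$ (identified there as `t12060'): the two twist regions of the tangle $T$ in Figure \ref{fig:KnTwiceDrilledForHyperbolic} correspond in the branched double cover to the two cusps of $M$, with filling slopes determined by the twist parameters $-10n$ and $10n+3$. If some Dehn filling of $M$ turns out to be $S^3$, then $M = S^3 \setminus L$ for a $2$-component link $L \subset S^3$, and every $M_n$ is realized as surgery on $L$, yielding $p_L(M_n, S^3) \leq 2$.

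I would verify this via the Montesinos trick in reverse: it suffices to find integers $(p_0, q_0)$ so that the Kanenobu knot $K_{p_0, q_0}$ is the unknot, since then $\Sigma(K_{p_0, q_0}) = S^3$ is precisely the corresponding Dehn filling of $M$. This amounts to a finite check on the Kanenobu diagram, which can be done by direct diagrammatic simplification or by a knot-identification routine in SnapPy. As a fallback, one can bypass the search for a trivial tangle filling: if $K_n$ can be unknotted by two crossing changes (i.e., $u(K_n) \leq 2$), then by the classical Montesinos trick these two crossing changes lift to two Dehn surgeries in the double branched cover, directly producing a $2$-component surgery link in $S^3$ realizing $M_n$.

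The main obstacle is this concrete diagrammatic step — exhibiting either a pair of twist parameters yielding the unknot, or a pair of crossings whose simultaneous change unknots $K_n$. Once such a verification is in hand, combining the upper bound with the lower bound from Theorem \ref{thm:OurNewExamples} gives $p_L(M_n, S^3) = 2$ for all but finitely many $n$ and $p_L(M_n, S^3) \leq 2$ for every $n$.
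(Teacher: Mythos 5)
Your lower bound is exactly the paper's: it follows from Theorem \ref{thm:OurNewExamples}, and that part is fine. The problem is with your preferred route to the upper bound. You propose to find integers $(p_0,q_0)$ with $K_{p_0,q_0}$ the unknot, so that the corresponding filling of the two-cusped manifold $M$ is $S^3$ and every $M_n$ becomes surgery on a fixed two-component link $L\subset S^3$. This search cannot succeed: the Kanenobu knots all have determinant $25$ (equivalently, as the paper records from Greene--Watson, $H_1(\Sigma(K_{p,q}))$ has order $25$), while the unknot has determinant $1$. So $S^3$ is never the double branched cover of any $K_{p_0,q_0}$, i.e.\ no filling of $M$ along the slopes coming from integer twists in the two twist regions of Figure \ref{fig:KnTwiceDrilledForHyperbolic} is $S^3$, and the ``Montesinos trick in reverse'' as you set it up is obstructed from the start.

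Your fallback -- unknot $K_n$ by two crossing (or rational tangle) changes and lift them via the Montesinos trick to a two-component surgery link in $S^3$ realizing $M_n$ -- is precisely the paper's argument. But you explicitly defer the diagrammatic step, and that step is the entire content of the proof: the paper exhibits, in Figure \ref{fig:dAtMostTwo}, two crossing regions of the diagram of $K_n$ (necessarily \emph{different} from the $p,q$ twist regions, by the determinant obstruction above) whose replacement yields the unknot, and these regions sit in the part of the diagram that does not depend on $n$, so one unknotting recipe works for all $n$ simultaneously. Without producing such a pair of tangle regions -- and checking it works uniformly in $n$ -- you have the correct strategy but not a proof of the upper bound $p_L(M_n,S^3)\leq 2$.
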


\begin{proof}
Since we can produce the unknot by switching two crossing regions of the diagram for $K_n$ as in Figure \ref{fig:dAtMostTwo}, the Montesinos trick shows that $M_n$ can be obtained from surgery along a two component link in $S^3$. Hence, we see the upper bound $p_L(M_n,S^3)\leq 2$ and   $p_L(M_n,S^3)\geq 2$ is established for all but at most finitely many $n$ by the Theorem \ref{thm:OurNewExamples}. 
\begin{figure}
\centering
\subfigure[ We drill out two crossing regions.
]{
\resizebox{3.2 cm}{!}{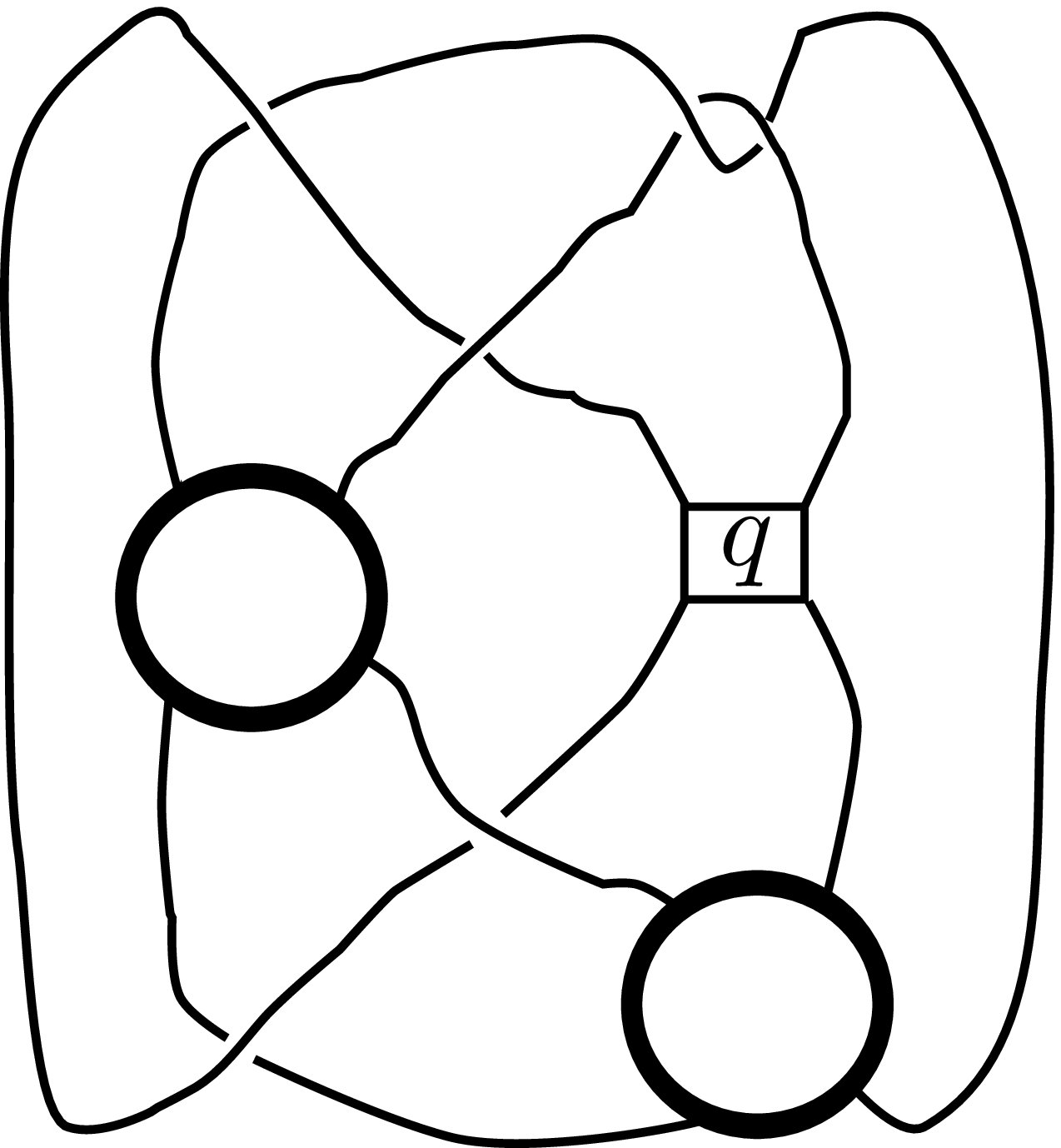}
\label{fig:KnFilledTwiceDrilled}
}
\hspace{2cm}
\subfigure[Then we fill as above.]{
\resizebox{3.2 cm}{!}{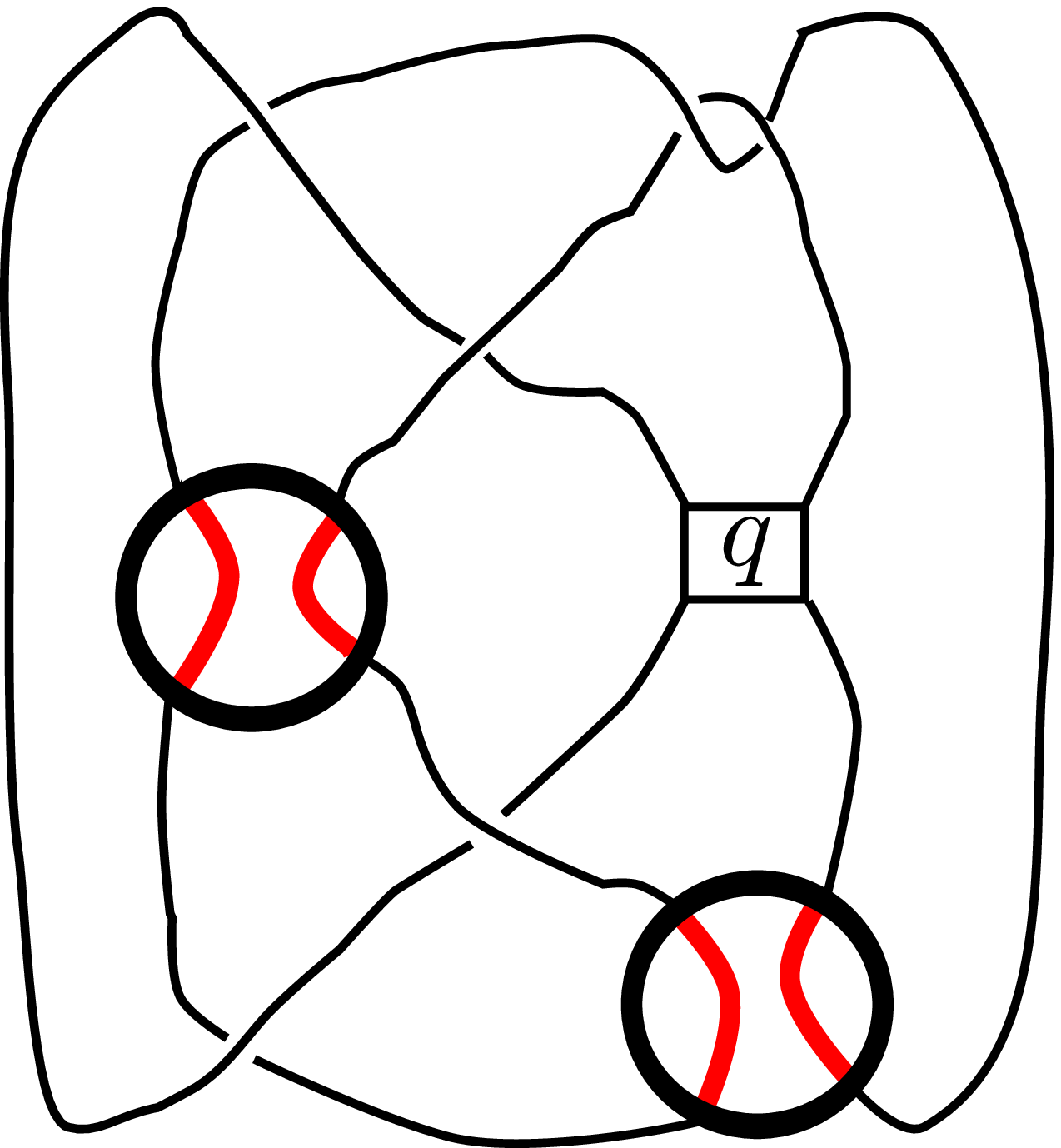} 
\label{fig:KnFilledToBeUnknot1}
}
%
\caption{\label{fig:dAtMostTwo} These diagrams show $K_n$ switching two tangle regions produces the unknot}
\end{figure}
\end{proof}

\begin{remark}
In \cite{MARENGON}, Marengon extends the techniques given here to exhibit an infinite four parameter family of double branched covers of knots given by a Kaneobu like construction. \end{remark}

%

\section{Complete infinite subgraphs}\label{sect:kinfty}
Here we discuss an interesting property which may allow one to ``see" knots in the graph $\mathcal{B}$.  
We also want to employ the notion of the set of neighbors of a vertex in a graph. More formally, for a graph $G$ and a subset $\lbrace w_i \rbrace$ of the vertices of $G$, let $\langle\lbrace w_i \rbrace \rangle$ be the subgraph induced by these vertices. That is, the vertices of $\langle\lbrace w_i \rbrace \rangle$ are $\lbrace w_i \rbrace $, and  $(w_i, w_j)$ is an edge of $\langle\lbrace w_i \rbrace \rangle$ exactly when $(w_i, w_j)$ is an edge of $G$.  Then,  as  in the introduction, we define the {\it link a vertex $v$ in $G$} to be $\langle\lbrace w_i \rbrace \rangle$, for all $w_i$ which are path length one from $v$.  

\begin{definition} If $K$ is a knot in a 3-manifold $M$ then $(M)^K _\infty = \langle \lbrace v_{M(K;r)} \rbrace  \rangle$, where $\lbrace M(K;r) \rbrace $ is the set of 3-manifolds obtained from $M$ via Dehn surgery on $K$.
\end{definition} 

\begin{proposition} For any closed 3-manifold $M$ and knot $K \subset M$, $M^K _\infty$ is a $\KI$.  
\end{proposition}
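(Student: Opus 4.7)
The claim is the conjunction of two things: (a) $M^K_\infty$ has infinitely many vertices, and (b) any two such vertices are adjacent in $\thegraph$, so that the induced subgraph is $\KI$.

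For (b) I would use the dual knot trick. In $M(K;r_1)$ let $K^\ast$ denote the core of the solid torus filled along $r_1$; its complement in $M(K;r_1)$ is canonically $E := M\setminus\nu(K)$, and Dehn filling $E$ along the slope of $\partial E$ corresponding to $r_2$ returns precisely $M(K;r_2)$. Hence whenever $M(K;r_1)\neq M(K;r_2)$, the knot $K^\ast$ realises $M(K;r_2)$ as a Dehn surgery on $M(K;r_1)$, witnessing the required edge in $\thegraph$.

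The real content is (a), and one cannot argue slope-by-slope, since distinct slopes may yield homeomorphic fillings (for instance every $1/n$-surgery on the unknot in $S^3$ gives $S^3$). I would split on whether $E$ is hyperbolic. If $E$ admits a complete finite volume hyperbolic structure, Thurston's hyperbolic Dehn surgery theorem \cite[Theorem 5.8.2]{THURSTONNOTES} gives that all but finitely many fillings $M(K;r)$ are hyperbolic, with volumes strictly less than and accumulating at $\mathrm{vol}(E)$; Mostow rigidity then forces infinitely many distinct homeomorphism types. If $E$ is not hyperbolic, I would distinguish fillings by $H_1$. By half-lives-half-dies, the inclusion $H_1(\partial E;\Q)\to H_1(E;\Q)$ has a one-dimensional kernel, so one can pick a basis $\mu,\lambda$ of $H_1(\partial E)$ with $[\lambda]$ torsion in $H_1(E)$ and $[\mu]$ of infinite order. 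Then $H_1(M(K;r))=H_1(E)/\langle p[\mu]+q[\lambda]\rangle$ when $r=p\mu+q\lambda$, and I would show that as $p\to\infty$ (with $q$ fixed) infinitely many isomorphism types of this quotient appear, so infinitely many distinct fillings occur.

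The main obstacle is the homological half of (a): checking that the torsion order of these quotients is really unbounded in $p$. This reduces to a Smith-normal-form computation which, once one notes that $[\lambda]$ has bounded (finite) order while $p[\mu]$ contributes a factor linear in $p$, separates the contributions and produces infinitely many distinct $H_1(M(K;r))$, completing the proof that $M^K_\infty$ is $\KI$.
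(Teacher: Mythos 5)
Your proposal is correct, and part (b) (adjacency via the dual knot, i.e.\ the core of the filling solid torus) is the same observation the paper dismisses as ``a consequence of the definition.'' Where you genuinely diverge is in producing infinitely many distinct fillings: the paper splits according to the geometric decomposition of $M\setminus K$ --- hyperbolic (Thurston's Dehn surgery theorem plus volumes, as you do), Seifert fibered (fillings producing cone points of arbitrarily high order in the base orbifold, using uniqueness of the fibration), toroidal (citing that infinitely many fillings preserve the JSJ decomposition, then passing to the piece containing $\partial$), and reducible (splitting off the irreducible summand containing $K$) --- whereas you replace everything non-hyperbolic by a single homological argument. That argument does go through: by half-lives-half-dies the kernel of $H_1(\partial E;\Q)\to H_1(E;\Q)$ is exactly one-dimensional, so one can choose a basis $\mu,\lambda$ of $H_1(\partial E;\Z)$ with $[\lambda]$ torsion and $[\mu]$ of infinite order in $H_1(E)\cong\Z^b\oplus T$; taking $q=1$ (to keep $p\mu+q\lambda$ primitive) and writing $[\mu]=(\vec m,t_0)$ with $d=\gcd(\vec m)$, the quotient $H_1(E)/\langle p[\mu]+[\lambda]\rangle$ is $\Z^{b-1}$ plus a finite group of order $pd\,|T|$, so the torsion order grows linearly in $p$ and infinitely many homeomorphism types occur. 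Note that this makes your hyperbolic/non-hyperbolic dichotomy unnecessary: the homology count works uniformly for every knot exterior, so your route is in fact more elementary and self-contained (no appeal to uniqueness of Seifert fibrations or to the toroidal Dehn filling results the paper cites), while the paper's case analysis has the virtue of exhibiting \emph{which} structures the infinitely many fillings carry (hyperbolic with controlled volume, Seifert fibered with prescribed cone points, or sharing a common JSJ decomposition), information that is used elsewhere in the paper.
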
 

\begin{proof}That every vertex in $M^K _\infty$ is connected to every other one is a consequence of the definition.  We just need to observe that there are infinitely many different manifolds in this subgraph. If $M \setminus K$ admits a hyperbolic structure, then all but finitely many fillings are hyperbolic.   Furthermore, the volumes approach the volume of $M \setminus K$ and so there are infinitely many different homeomorphism types. If $M \setminus K$ is Seifert-fibered (including the unknot complement in $S^3$), it is Seifert-fibered over an orbifold $O$ with boundary.  The fillings $r$ can be chosen so that the result is Seifert-fibered over an orbifold where the boundary component of $O$ is replaced with a cone point of arbitrarily high order, so the Seifert-fibered spaces are not homeomorphic.  If $M \setminus K$ admits a decomposition along incompressible tori, then, infinitely many fillings have this same decomposition \cite{TOROIDAL}. Then the boundary of $M \setminus K$ is in either a hyperbolic piece or a Seifert-fibered piece, and the above arguments apply. Finally, if $M \setminus K$ is reducible, then there exists a separating $S^2$ such that $M \setminus K = M_1 \# M_2 \setminus K$ where $M_2 \setminus K$ is irreducible. In this case, the previous arguments can be applied to yield the desired result. 
\end{proof}

Note that sometimes, the intersection of two $\KI$ subgraphs arising from fillings on knot complements may intersect in a $\KI$.  For example, let $U$ be the unknot and $T_{r,s}$ a torus knot. Let $(S^3)^U_\infty$ be the $\KI$ associated to $S^3\setminus U$, and  $(S^3)^{T_{r,s}}_\infty$ be the $\KI$ associated to  $S^3 \setminus T_{r,s}$.  Then $(S^3)^U_\infty\cap (S^3)^{T_{r,s}} _\infty$ is a $\KI$ where each vertex is a lens space (see \cite{LMOSER}). However, this phenomena cannot happen for hyperbolic manifolds. 

\begin{proposition} If $M \setminus K$ and $M' \setminus K'$ are hyperbolic and not homeomorphic, then the subgraphs $(M)^K _\infty$ and $(M')^{K'}_\infty$  have at most finitely many vertices in common.  
\end{proposition}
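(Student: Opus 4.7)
The plan is to argue by contradiction. Suppose $(M)^K_\infty \cap (M')^{K'}_\infty$ contains infinitely many distinct manifolds $N_i = M(K;r_i) = M'(K';s_i)$. Since each hyperbolic cusp carries only finitely many slopes of bounded length, after passing to a subsequence we may assume both $\ell(r_i)\to\infty$ on $\partial(M\setminus K)$ and $\ell(s_i)\to\infty$ on $\partial(M'\setminus K')$. Thurston's Hyperbolic Dehn Surgery Theorem then makes each $N_i$ hyperbolic (for $i$ large) and produces short core geodesics $\gamma_i,\gamma'_i\subset N_i$ (the cores of the two surgery solid tori) with $\ell(\gamma_i),\ell(\gamma'_i)\to 0$ and with $N_i\setminus\gamma_i\cong M\setminus K$, $N_i\setminus\gamma'_i\cong M'\setminus K'$.

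Once the lengths of $\gamma_i,\gamma'_i$ fall below the Margulis constant, either $\gamma_i=\gamma'_i$ --- in which case $M\setminus K\cong M'\setminus K'$, contradicting the hypothesis --- or the two geodesics lie in disjoint Margulis tubes. Discarding finitely many $i$, assume the latter. Drilling both geodesics from $N_i$ yields a $2$-cusped hyperbolic manifold $Z_i := N_i\setminus(\gamma_i\cup\gamma'_i)$ (by the drilling theorem of Kojima/Brock--Bromberg), and the Neumann--Zagier expansion for volume change under Dehn filling gives $\mathrm{vol}(Z_i)\to\mathrm{vol}(M\setminus K)$, so the $Z_i$ have uniformly bounded volume. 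By J\o rgensen--Thurston finiteness (only finitely many cusped hyperbolic $3$-manifolds exist below any volume bound), a subsequence has $Z_i\cong Z$ for a fixed $2$-cusped hyperbolic $Z$, and pigeonholing among the two cusps of $Z$ allows us to assume that under each identification the ``$\gamma_i$-cusp'' of $Z_i$ is sent to a fixed cusp $C$ of $Z$.

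Filling $Z$ at $C$ along the slope $\alpha_i$ corresponding to the meridian of $\gamma_i$ recovers $N_i\setminus\gamma'_i\cong M'\setminus K'$. The slopes $\alpha_i$ are pairwise distinct (else the $N_i$ would coincide), and their lengths on $C$ tend to infinity, since a core geodesic of length $\to 0$ has meridian of length $\to \infty$ on the resulting cusp torus after drilling. Applying Thurston's Hyperbolic Dehn Surgery Theorem a second time --- now to $Z$ itself --- we obtain $Z(C;\alpha_i)\to Z$ in the geometric topology. But $Z(C;\alpha_i)=M'\setminus K'$ is a \emph{constant} sequence, so its limit must be $M'\setminus K'$, giving $M'\setminus K'\cong Z$. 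Since $Z$ has two cusps while $M'\setminus K'$ has only one, this is the required contradiction.

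The main obstacle is the bookkeeping that matches the two core geodesics in $N_i$ with the two cusps of the fixed model $Z$, and that makes precise the quantitative claim that the slopes $\alpha_i$ have length tending to infinity on $C$ (so that Thurston's theorem applies to $Z(C;\alpha_i)$). The essential inputs are Thurston's Hyperbolic Dehn Surgery Theorem used twice, the Margulis lemma, the drilling theorem for hyperbolic structures on short-geodesic complements, and J\o rgensen--Thurston finiteness for bounded-volume cusped hyperbolic $3$-manifolds.
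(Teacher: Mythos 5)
Your overall strategy (drill both surgery cores out of the common fillings and compare) could be made to work, but as written it has a genuine gap at the step that makes everything else possible: the appeal to ``J{\o}rgensen--Thurston finiteness'' in the form ``only finitely many cusped hyperbolic $3$-manifolds exist below any volume bound.'' That statement is false. The J{\o}rgensen--Thurston theory says that the manifolds of volume at most $V$ are obtained by Dehn filling finitely many cusped manifolds (and that any single volume is realized finitely often); it does \emph{not} say the set of cusped manifolds below $V$ is finite. For example, filling one cusp of the Whitehead link complement produces infinitely many distinct one-cusped hyperbolic manifolds of volume less than $3.67$, and filling one cusp of a $3$-cusped chain link complement produces infinitely many distinct two-cusped examples of bounded volume. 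So from a uniform volume bound on the $Z_i$ you cannot extract a constant subsequence $Z_i\cong Z$, and without a fixed $Z$ the refilling argument (distinct slopes $\alpha_i$ on a fixed cusp $C$, constant filled manifold, second application of Thurston's theorem) does not get started. Repairing this forces you to replace the pigeonhole by a pointed geometric limit of the $Z_i$ (or of the $N_i$ themselves), which is exactly the route the paper takes: choosing basepoints in the thick parts of the $N_i$, the pointed geometric limit is simultaneously $M\setminus K$ and $M'\setminus K'$, so these are homeomorphic, a contradiction. In other words, the correct fix collapses your argument back to the paper's much shorter one.

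Two smaller points would also need attention even if the finiteness step were repaired. First, the claim $\mathrm{vol}(Z_i)\to\mathrm{vol}(M\setminus K)$ is not a consequence of the Neumann--Zagier expansion, which concerns varying fillings of a \emph{fixed} cusped manifold; here the drilled manifolds $Z_i$ vary, so you need a uniform drilling estimate (e.g.\ the bilipschitz drilling theorem of Brock--Bromberg, or the Agol--Storm--Thurston type volume comparison for drilling a short geodesic) to bound $\mathrm{vol}(Z_i)$. Second, ``the slopes $\alpha_i$ are pairwise distinct (else the $N_i$ would coincide)'' is not justified: $N_i$ is determined by the pair of filling slopes on the two cusps, so equal $\alpha_i$'s do not force equal $N_i$'s. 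This one is patchable: if only finitely many distinct $\alpha_i$ occur, then infinitely many distinct slopes must occur on the other cusp, and the symmetric argument (constant filled manifold $M\setminus K$ there) gives the same contradiction.
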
 

\begin{proof}  Assume that $(M)^K _\infty$ and $(M')^{K'}_\infty$ have infinitely many vertices in common.  Then infinitely many of these are hyperbolic.  Denote this set by $\lbrace N_i \rbrace_i \in \mathbb{N}$.   Choose a basepoint in the thick part of each $N_i$.  Then the geometric limit of the $N_i$ is $M \setminus K$ and it is also $M' \setminus K'$, so they must be homeomorphic. See \cite{Gromovbourbaki} for background on geometric limits. \end{proof} 

\subsection{Subgraphs which do not arise from filling} 

\begin{theorem}
 There is a $\KI$ of small Seifert fibered spaces that does not come from surgery along a one cusped manifold.
\end{theorem}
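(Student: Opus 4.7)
\medskip
\noindent \textbf{Proof plan.} My approach combines a geometric obstruction with a construction. The obstruction will show that if infinitely many small SFS are Dehn fillings of a common 1-cusped manifold $N$, then $N$ itself is Seifert fibered with base $D^2$ and exactly two cone points, of orders $(\alpha, \beta)$ say; consequently, all such fillings share the same pair $(\alpha, \beta)$ of smaller multiplicities. It will then suffice to construct a $\KI$ of small SFS in which the pairs of smaller exceptional multiplicities realize infinitely many distinct values.

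To establish the obstruction, suppose $\{M_n\}$ are infinitely many small SFS, each a Dehn filling of a 1-cusped manifold $N$. Since small SFS do not admit a hyperbolic structure, Thurston's Hyperbolic Dehn Surgery Theorem rules out $N$ being hyperbolic: only finitely many fillings of a hyperbolic 1-cusped manifold are non-hyperbolic. If $N$ has nontrivial JSJ decomposition, any JSJ torus not in the piece containing the cusp remains essential after any filling, preventing atoroidality of small SFS; and a JSJ torus adjacent to the cusp piece becomes compressible for only finitely many slopes, by standard results on solid-torus fillings of the cusp piece. Hence $N$ is Seifert fibered, with one torus boundary. For generic fillings to have base $S^2$ with exactly three exceptional fibers, the base of $N$ must be $D^2(\alpha, \beta)$; fillings are then $S^2(\alpha, \beta, \gamma)$ for varying $\gamma$, and share $(\alpha, \beta)$.

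For the construction, I would proceed inductively. Start with a small SFS $M_1$, such as a Brieskorn sphere. At stage $n+1$, produce a new small SFS $M_{n+1}$ with a pair of smaller exceptional multiplicities not previously encountered, arranged so that $M_{n+1}$ is Dehn-surgery equivalent to each of $M_1, \ldots, M_n$. Crucially, the knot realizing the surgery relation between $M_i$ and $M_{n+1}$ is allowed to depend on $i$, so the resulting $\KI$ is not forced to be the family of fillings of any single cusped manifold. Candidate connecting knots can be supplied by Myers' theorem on excellent knots in each $M_i$, combined with explicit surgery descriptions of small SFS as fillings on links in $S^3$.

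The main obstacle is the construction step. Pairwise Dehn surgery relations between small SFS with substantially different Seifert invariants are not generic: surgery on a hyperbolic knot in a small SFS typically produces a hyperbolic manifold or a graph manifold, not another small SFS. Realizing one target $M_{n+1}$ as a Dehn surgery simultaneously on each of several previously chosen $M_i$ with diverse Seifert invariants is therefore a delicate simultaneous-surgery problem, and I expect the proof to rely on an explicit surgery description — perhaps a specific multi-component link in $S^3$ whose various fillings yield all the required small SFS together with the connecting knots — rather than a purely abstract existence argument.
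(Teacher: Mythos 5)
Your proposal has the right overall architecture (an obstruction forcing a putative one-cusped source $N$ to be Seifert fibered over $D^2$ with two exceptional fibers, plus a family of small SFS defeating that), but the construction half is not actually carried out: you explicitly leave the existence of the $\KI$ as an unresolved ``delicate simultaneous-surgery problem,'' and that is the heart of the theorem. The idea you are missing is that the connecting knots should be \emph{exceptional fibers}: Dehn surgery along an exceptional fiber of a Seifert fibered space over $S^2$ changes only that fiber's Seifert invariant, so two small SFS with hyperbolic base orbifold that share two of their three exceptional fiber invariants (mod $1$, with the appropriate sum) are at distance one in $\thegraph$. The paper fixes four distinct rationals $a_i/b_i\in(0,1)$ with $\sum 1/b_i<1$ and takes the four manifolds obtained by using three of the four as exceptional fibers; any two share two fibers, giving a $\Kfour$. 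Adding an integer $j$ to every invariant produces new manifolds (the sum of invariants changes) which still share two fibers mod $1$ with every previously constructed vertex, so the family $M_{i,j}$ is a $\KI$. By uniqueness of the Seifert fibration over a hyperbolic base orbifold, no single pair of fiber invariants is carried by all four types $M_{1,j},\dots,M_{4,j}$, which is exactly what the obstruction requires. No Myers knots, no surgery description in $S^3$, and no inductive simultaneous-surgery argument are needed.

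Your obstruction argument also has a genuine hole at the toroidal step. The claim that a JSJ torus adjacent to the cusp piece ``becomes compressible for only finitely many slopes'' is false: if the piece between the cusp and the torus is a cable space, then infinitely many fillings of that piece are solid tori, so the torus compresses for infinitely many slopes. The correct tool is \cite[Theorem 2.01]{CGLS}: an essential torus that compresses in fillings at distance greater than one (in particular, in infinitely many fillings) must cobound a cable space $C$ with $\partial N$; one then replaces filling along $\partial N$ by filling along $\partial(N\setminus C)$, obtaining the same set of closed manifolds, and thereby reduces to the atoroidal case. You also omit the reducible case (the paper handles it with Gordon--Luecke: a non-separating sphere survives every filling, and a separating essential sphere can be killed by at most one filling), and your phrase ``pair of smaller multiplicities'' should be replaced by the pair of exceptional fiber invariants (mod $1$) shared by all but finitely many fillings of $N$; it is that pair which your family must fail to have in common, which is what the four-rationals construction achieves.
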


\begin{proof}
We will construct a family $M_{i,j}$, $i \in \lbrace 1,2,3,4\rbrace, j \in \mathbb{N}$ of Seifert fibered spaces over an orbifold with base space $S^2$ and negative Euler characteristic.    We follow notation in \cite{Hatcher}.  In particular, we denote a closed Seifert fibered space  by $SFS(F; \alpha_1/\beta_1, ... , \alpha_n/\beta_n)$ where $F$ is the underlying space of the base orbifold. The cone points of the base orbifold will have multiplicities $\beta_i$. The Seifert fibered invariants of the exceptional fibers are $\alpha_i/\beta_i$, which are allowed to take values in $\mathbb{Q}$.    Two Seifert fiberings $SFS(F;  \alpha_1/\beta_1, ... , \alpha_n/\beta_n)$ and $SFS(F';  \alpha_1'/\beta_1', ... , \alpha_m'/\beta_m')$ are isomorphic by a fiber preserving diffeomorphism if and only if after possibly permuting indices, $\alpha_i/\beta_i \equiv \alpha_i'/\beta_i' \mod 1$ and, if $F$ is closed, $\sum \alpha_i/\beta_i = \sum \alpha_i'/\beta_i'$. \cite[Proposition 2.1]{Hatcher}. 

Now let $\{a_1/b_1, a_2/b_2, a_3/b_3, a_4/b_4\}$ be four distinct rational numbers, such that $0 <a_i/b_i <1$,  $\sum 1/b_i <1$ and $a_i, b_i$ are relatively prime. Let $M_{4,0}$ be the Seifert fibered space over $S^2$ with three exceptional fibers labeled by $a_i/b_i$ $(i \ne 4)$. We can define $M_{1,0}$, $M_{2,0}$, $M_{3,0}$ similarly. The condition $\sum 1/b_i <1$ ensures that each manifold will be Seifert fibered over a hyperbolic orbifold. 

Note that each manifold $M_{i,0}$ has exactly two common exceptional fibers with the others mod 1, and manifolds with fibrations over hyperbolic base orbifolds have unique Seifert fibered structures \cite[Theorem 3.8]{SCOTT}. 
\begin{observation}
The set of manifolds $\{M_{i,0}\}$ form a $\Kfour$ in $\thegraph$.
\end{observation}

We will now construct a $\KI$ which consists of infinitely many of these $\Kfour$. Note that if we add $1$ to each Seifert invariant of each exceptional fiber above, we get another $\Kfour$.  Each new manifold is distinct from the manifolds in the previous $\Kfour$ since the sum of its Seifert invariants is not equal to any vertex in the original. Each vertex in the new $\Kfour$ is connected to each vertex of the previous $\Kfour$ as, for example  $SFS(S^2; a_1/b_1 +1, a_2/b_2 +1, a_3/b_3 +1) \equiv SFS(S^2; a_1/b_1+3,a_2/b_2, a_3/b_3) \equiv SFS(S^2; a_1/b_1, a_2/b_2 +3, a_3/b_3) \equiv SFS(S^2; a_1/b_1 , a_2/b_2 , a_3/b_3 +3)$.  Dehn surgery along one of the exceptional fibers can result in any manifold which is a vertex of the original $\Kfour$. Continuing this way, we have a $\KI$, parametrized by $(i,j)$, where $i \in \lbrace 1,2,3,4 \rbrace$ and $j \in \mathbb{N}$.  Specifically, $M_{i,j}$ is as follows:

$$\{M_{1,j} =  SFS(S^2; a_2/b_2 +j, a_3/b_3 +j, a_4/b_4 +j), $$
$$M_{2,j} =  SFS(S^2;  a_1/b_1 +j,  a_3/b_3 +j, a_4/b_4 +j), $$ 
$$M_{3,j} =  SFS(S^2; a_1/b_1 +j , a_2/b_2 +j, a_4/b_4 +j),  $$
$$M_{4,j} =  SFS(S^2; a_1/b_1 +j, a_2/b_2 +j, a_3/b_3 +j)\}.$$

Assume this $\KI$ comes from filling a one cusped 
manifold $M$. First, $M$ must be irreducible.  Indeed if it were reducible, there would be a two-sphere that did not bound a ball in $M$.  If the sphere is non-separating it will remain non-separating in any filling.  If it is separating, there is at most one filling of a knot in a ball which will make it a ball, \cite{GORDONLUECKE}. 

Next we observe that each $M_{i,j}$ is a small Seifert fibered space and in particular non-Haken.  
We claim that we may assume $M$ does not contain an essential torus. Indeed, if $M$ does contain an essential torus $T$, then that torus compresses in infinitely many fillings. Infinitely many fillings cannot be pairwise distance 1 and thus by \cite[Theorem 2.01]{CGLS}, $T$ and $\partial M$ cobound a cable space, $C$. 
Surgeries on cable spaces are well-understood.  As in \cite[p 179]{BOYER}, the filling of the cable space is either reducible (only along the cabling slope), a solid torus, or a manifold with an incompressible boundary torus.  Since $T$ is compressible in the fillings, $T$ bounds a solid torus in each of the filled manifolds.  Therefore, we can replace filling along $\partial M$ by filling along the torus boundary of $M \setminus C$ and get the same set of manifolds.

Thus we may assume $M$ does not contain an essential torus and since it is irreducible $M$ is geometric. 

If $M$ is not hyperbolic, it is a Seifert fibered space containing no essential  tori.   Thus $M$ must be Seifert fibered over the disk with at most two exceptional fibers. Each $M_{i,j}$ admits a unique Seifert fibration (see \cite[Theorem 3.8]{SCOTT}). Any choice of two elements $\{a_i/
b_i \}$ to label the exceptional fibers of $M$ will disagree with two exceptional fibers in one of the $M_i$, which is a contradiction to the existence of such 
an $M$.
Thus $M$ must be hyperbolic.  However, there 
are infinitely many Seifert fibered manifolds that come from surgery on $M$. This contradicts Thurston's Hyperbolic Dehn Surgery Theorem \cite[Theorem 
5.8.2]{THURSTONNOTES}. 
\end{proof}



\section{Seifert Fibered Spaces and Solv Manifolds} \label{ThurstonGeoms} 
\label{sect:ThurstonGeoms}

Before we discuss the hyperbolic part of the graph, we briefly discuss other geometries and Seifert fibered spaces.  By compiling work of Montesinos and Dunbar, we can obtain upper bounds for any non-hyperbolic geometric manifold.  The general idea is that ``simpler" manifolds lie close to $S^3$. We begin with a theorem of Montesinos \cite{MontesinosBook}. $\chi(F)$ is the Euler characteristic of $F$.

\begin{theorem}[Montesinos] 
Let $M$ be a closed, orientable Seifert fibered space over the surface $F$ with $n$ exceptional fibers. 
\begin{enumerate}
\item If $F$ is orientable, then $p_L(M, S^3) \leq 3-\chi(F) + n$. 
\item If $F$ is non-orientable, then $p_L(M, S^3) \leq 5-2\chi(F) + n$. 
\end{enumerate}
\end{theorem}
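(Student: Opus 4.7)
The plan is to give, for each Seifert fibered space $M$ satisfying the hypotheses, an explicit surgery diagram in $S^3$ realizing $M$ and then to count its components. The argument splits naturally into two stages: (a) realizing the underlying circle bundle over $F$ by surgery in $S^3$, and (b) converting regular fibers into exceptional fibers one at a time.

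I would first handle (a). When $F = S^2$, any orientable $S^1$-bundle over $S^2$ (that is, $S^3$, $S^2 \times S^1$, or a lens space, depending on the Euler number) is obtained from surgery on a single unknot, matching $3 - \chi(S^2) = 1$. For a higher-genus orientable base $F = \Sigma_g$ I would proceed by induction on the genus: attaching a handle to the base surface can be realized in the surgery picture by introducing two new unknotted components (a linked pair that records the new handle and also carries the framing data needed to fix the Euler number). This yields a diagram of $1 + 2g = 3 - \chi(\Sigma_g)$ components for any orientable $S^1$-bundle over $\Sigma_g$. For a non-orientable base $F = N_k$, each cross-cap analogously costs two surgery components: one to introduce the non-orientable handle in the base and one to correct the framing so that the total space remains orientable. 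This gives a diagram of $1 + 2k = 5 - 2\chi(N_k)$ components.

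For (b), each exceptional fiber with invariant $\alpha_i/\beta_i$ is produced by selecting a regular fiber of the fibration constructed in (a), drilling it out, and performing Dehn filling along slope $\alpha_i/\beta_i$. In the surgery picture this simply appends one new component (a meridian of a regular fiber) with the appropriate surgery coefficient. Iterating this $n$ times adds exactly $n$ components, yielding the claimed bounds $3 - \chi(F) + n$ and $5 - 2\chi(F) + n$.

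The main obstacle is stage (a): writing down an explicit surgery diagram for an arbitrary orientable $S^1$-bundle over a general surface and verifying that a handle or cross-cap really costs exactly two components. Morally, each handle increases the rank of $H_1$ of the base by two, so two new generators (and therefore two surgery components) are required; but the bookkeeping has to be done carefully so that the total space is orientable and realizes the intended bundle. Once this is established, the exceptional-fiber step is a standard Dehn surgery manipulation and the theorem follows by combining the two stages.
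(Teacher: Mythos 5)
Your overall strategy---exhibit an explicit surgery link in $S^3$ realizing $M$ and count its components, with one component added per exceptional fiber---is exactly what underlies this theorem; the paper's own proof is nothing more than a citation of Montesinos' explicit links (\cite[Chapter 4, Figure 12]{MontesinosBook}) together with the observation that the $\tfrac{1}{b}$-labeled component in each of those links can be blown down to save one component. Your stage (b) and the orientable half of stage (a) are fine and standard: the Euler number $e$ circle bundle over $\Sigma_g$ has the usual $(2g+1)$-component diagram (an $e$-framed unknot with $g$ Borromean-style pairs of $0$-framed handle curves), and turning $n$ regular fibers into exceptional fibers appends $n$ rationally framed meridians of the central curve, giving $1+2g+n=3-\chi(F)+n$ as claimed.

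The genuine gap is the non-orientable half of stage (a), which you flag as the main obstacle but then support only with the heuristic that a cross-cap costs ``one component to introduce the non-orientable handle and one to correct the framing.'' That is not a construction, and the homological intuition you lean on actually points the wrong way: adding a cross-cap raises the rank of $H_1$ of the base by at most one (not two), so rank-counting would predict one component per cross-cap, whereas the stated bound charges two---the factor $2$ in $5-2\chi(F)$ must come from an actual diagram. Two standard ways to close this: (i) quote Montesinos' figure, as the paper does; or (ii) use the fact that the orientable circle bundle over the M\"obius band is the twisted $I$-bundle over the Klein bottle, which also fibers over the disk with two $(2,1)$ exceptional fibers, to trade each cross-cap for two $(2,1)$-fibers. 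After the trade, $M$ is Seifert fibered over $S^2$ with $n+2k$ exceptional fibers (where $k$ is the non-orientable genus and $\chi(F)=2-k$), and your orientable construction then yields a diagram with $1+(n+2k)=5-2\chi(F)+n$ components. With (i) or (ii) supplied, your argument is complete and coincides in substance with the paper's.
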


This theorem follows from the discussion in \cite[Chapter 4]{MontesinosBook} (see specifically Figure 12 in that chapter). Since each link in that figure has a component labeled by $\frac{1}{b}$, mild Kirby Calculus can be applied to the links in that figure to obtain a link with one fewer component. 
To understand the geometric non-hyperbolic manifolds, it remains to investigate solv manifolds, which are either torus bundles over $S^1$ or the union of twisted $I$ bundles over the Klein bottle (see \cite[Theorem 4.17]{SCOTT}). Work of Dunbar provides an orbifold analog to this statement, namely if $Q$ is an orientable Solv orbifold, $Q$ is either a manifold as above or an orbifold with fiber $S^2(2,2,2,2)$ over $S^1$ or the union of twisted I bundles with fiber $S^2(2,2,2,2)$ (see \cite[Propostion 1.1]{DUNBAR}). Using these two results, we can obtain the following:

\begin{proposition}
(1) 
 If $M$ is a solv torus bundle, then $p_L(M,S^3)\leq 5$.

(2) If $M$ is the union of twisted $I$ bundles over the Klein bottle admitting an orientable solv structure, then  $p_L(M,S^3)\leq 3$.
\end{proposition}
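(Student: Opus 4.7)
The plan is to mirror the preceding treatment of Seifert fibered spaces, but with Montesinos' explicit surgery constructions replaced by branched-cover constructions coming from Dunbar's classification of orientable Solv 3-orbifolds \cite{DUNBAR}. Specifically, Dunbar's Proposition 1.1 tells us that every orientable closed Solv 3-manifold $M$ carries an involution whose quotient orbifold $\mathcal{O}$ has regular fiber the pillowcase $S^2(2,2,2,2)$; in case (1), $\mathcal{O}$ is an $S^2(2,2,2,2)$-bundle over $S^1$, and in case (2), $\mathcal{O}$ is a union of two twisted $I$-bundles with pillowcase fiber. Realizing $M$ as the double branched cover of $\mathcal{O}$ and then converting the branched cover to Dehn surgery (via the Montesinos trick) will produce our surgery descriptions.

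For (1), I would first observe that the underlying space $|\mathcal{O}|$ is an $S^2$-bundle over $S^1$, and hence is obtained from $S^3$ by a single integer surgery on an unknot (either $0$-surgery giving $S^2 \times S^1$, or its twisted sibling). The singular locus of $\mathcal{O}$ consists of the images of the four cone points of the pillowcase fiber as one traverses $S^1$; the monodromy may permute these cone points, but in any case the singular locus is a link with at most four components in $|\mathcal{O}|$. Passing to the double branched cover via the Montesinos trick and then applying Kirby calculus to absorb $\pm 1/b$ components (just as in the proof of the preceding Montesinos theorem) produces a surgery description of $M$ in $S^3$ on at most $1 + 4 = 5$ components, giving $p_L(M,S^3)\leq 5$.

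For (2), the key computation is that each twisted $I$-bundle with pillowcase fiber has underlying space $D^3$: its orientation double cover is $T^2\times I$ modulo $\tilde{\sigma}$ (yielding the Klein bottle twisted $I$-bundle), and a further quotient by the hyperelliptic involution of the $T^2$ fiber produces an $I$-bundle over $S^2$ with a fold involution whose total effect is a 3-ball. Thus $|\mathcal{O}| = D^3 \cup_{S^2} D^3 = S^3$, so the branch locus sits directly inside $S^3$ with no preliminary surgery required. Tracking its components (the two interior arcs from each piece, joined along the four pillowcase cone points on the common boundary $S^2(2,2,2,2)$, together with any fixed circles from the fold involutions) and applying the Montesinos trick plus Kirby simplification yields a surgery description on at most $3$ components, so $p_L(M,S^3)\leq 3$.

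The main obstacle in both cases is the bookkeeping for the Montesinos trick: one must verify that the number of components produced by converting the branched double cover to a surgery, and subsequently simplified via Kirby calculus, actually does not exceed the claimed bound. For (1), this amounts to checking that the $S^2$-bundle base and the $\leq 4$ branch circles combine into $\leq 5$ surgery components without inflation; for (2), the additional rigidity provided by the union-of-$I$-bundles structure (and in particular the identification of arc endpoints at the four shared cone points) is what forces the count down from the naive bound to at most $3$.
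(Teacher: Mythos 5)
Your overall skeleton (pass to Dunbar's $2$-fold quotient orbifold with pillowcase fibers, then use the Montesinos trick) is the same as the paper's, but the quantitative step that actually produces the bounds is missing, and the counting principle you substitute for it is not valid. The Montesinos trick converts \emph{one rational tangle replacement} on the branch locus into \emph{one Dehn surgery} on the double branched cover; it does not say that the double branched cover of a manifold obtained from $S^3$ by surgery on $j$ components, branched over a $k$-component link, is surgery on $j+k$ components. So the count ``$1+4=5$'' in (1), and likewise ``at most $3$'' in (2), do not follow from anything you have written: the singular locus in case (1) is a closed $4$-string braid determined by an arbitrary Anosov monodromy, so it can be arbitrarily complicated even though it has at most four components, and the number of components of a branch locus carries no information about how many tangle replacements are needed to simplify it. The remark about ``absorbing $\pm 1/b$ components by Kirby calculus'' is also an artifact of Montesinos' explicit surgery diagrams for Seifert fibered spaces and has no counterpart in the branched-cover setting you are in.

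What is actually needed is an explicit short chain of rational tangle replacements terminating in a link whose double branched cover you can identify. In the paper's proof of (1), two specific tangle replacements turn the $4$-strand closed braid in $S^1\times S^2$ into a null-homotopic two-bridge link, one further replacement gives the unknot, and one more gives the trivial two-strand braid, whose double branched cover is $S^1\times S^2$; this gives $p_L(M,S^1\times S^2)\leq 4$ and hence $p_L(M,S^3)\leq 5$. For (2), the paper does not use the union-of-twisted-$I$-bundles orbifold directly as you propose; it quotients $M$ by a translation in the fiber to get an orbifold with underlying space $S^3$ and an explicit singular link, and then two rational tangle replacements produce a two-bridge link, whose double branched cover is a lens space, which is distance one from $S^3$, giving the bound $3$. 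Your claim that the underlying space in (2) is $S^3$ is plausible and consistent with this, but without exhibiting the tangle replacements (or some equivalent device) neither of your bounds is established.
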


\begin{proof}
(1) By \cite{DUNBAR}, $M$ admits a 2-fold quotient $Q$ such that the base space of $Q$ is $S^1\times S^2$ and the singular locus is a four strand braid $B$. Although that paper is careful to classify such braids, the details will not be relevant to this argument. Using the Montesinos trick, we have a sequence of tangle replacements to get from $Q$ to the trivial two strand braid in $S^1\times S^2$. The first two replacements of this sequence are shown in Figure \ref{fig:SolPL5}. The resulting link is two-bridge and therefore a single rational tangle replacement yields the unknot. The trivial two strand braid can be obtained from a single rational tangle replacement on the unknot.  Hence, $p_L(M, S^1 \times S^2) \leq 4$ and $p_L(M,S^3)\leq 5$.

(2) Let $M$ is the union of twisted $I$ bundles over the Klein bottle  admitting an orientable solv structure. Then $M$ is the 2-fold quotient of $\tilde{M}$ a solv torus bundle. Moreover, $\pi_1(\tilde{M})$ is the index 2 subgroup of $\pi_1(M)$ elements that preserve the orientation of every fiber of $M$ and we may consider $\pi_1(M)= \{\pi_1(\tilde{M}), \rho \pi_1(\tilde{M}) \}$ where $\rho$ is the composition of a translation $t$ in a fiber and a symmetry of Solv taking the form $\langle x,y,z \rangle \rightarrow \langle y,x,-z \rangle$ or $\langle -y,-x,-z \rangle$.

 Denote by $Q \cong M/\langle t \rangle$ the 2-fold quotient of $M$ by $t$.
The base space of $Q$ is $S^3$ and the singular set is isotopic to the link picture in Figure \ref{fig:SolTwistedIBundles}. The rational tangle replacements in that figure yield a two-bridge link and so the double branched cover of the resulting link is
a lens space. A lens space is path length one from $S^3$, completing the proof.
\end{proof}

Immediately from this section, we have that if $M$ is a closed orientable 3-manifold which admits a Nil, $\mathbb{E}^3$, $S^2 \times \mathbb{R}$, $S^3$ or solv geometry, then $p_L(M,S^3) \leq 5$. However, these upper bounds are not in general known to be sharp. The reader is referred to Margaret Doig's work \cite{DOIG,DOIG2} for a more comprehensive treatment of which manifolds admitting an elliptic geometric structure can be obtained from surgery along a knot in $S^3$.

\begin{figure}
\begin{center}
\subfigure[ \label{fig:SolPL5} Replacement of these rational tangles gives a null-homotopic~two bridge link in~$S^2\times S^1$.
]{
\resizebox{4 cm}{2.8 cm}{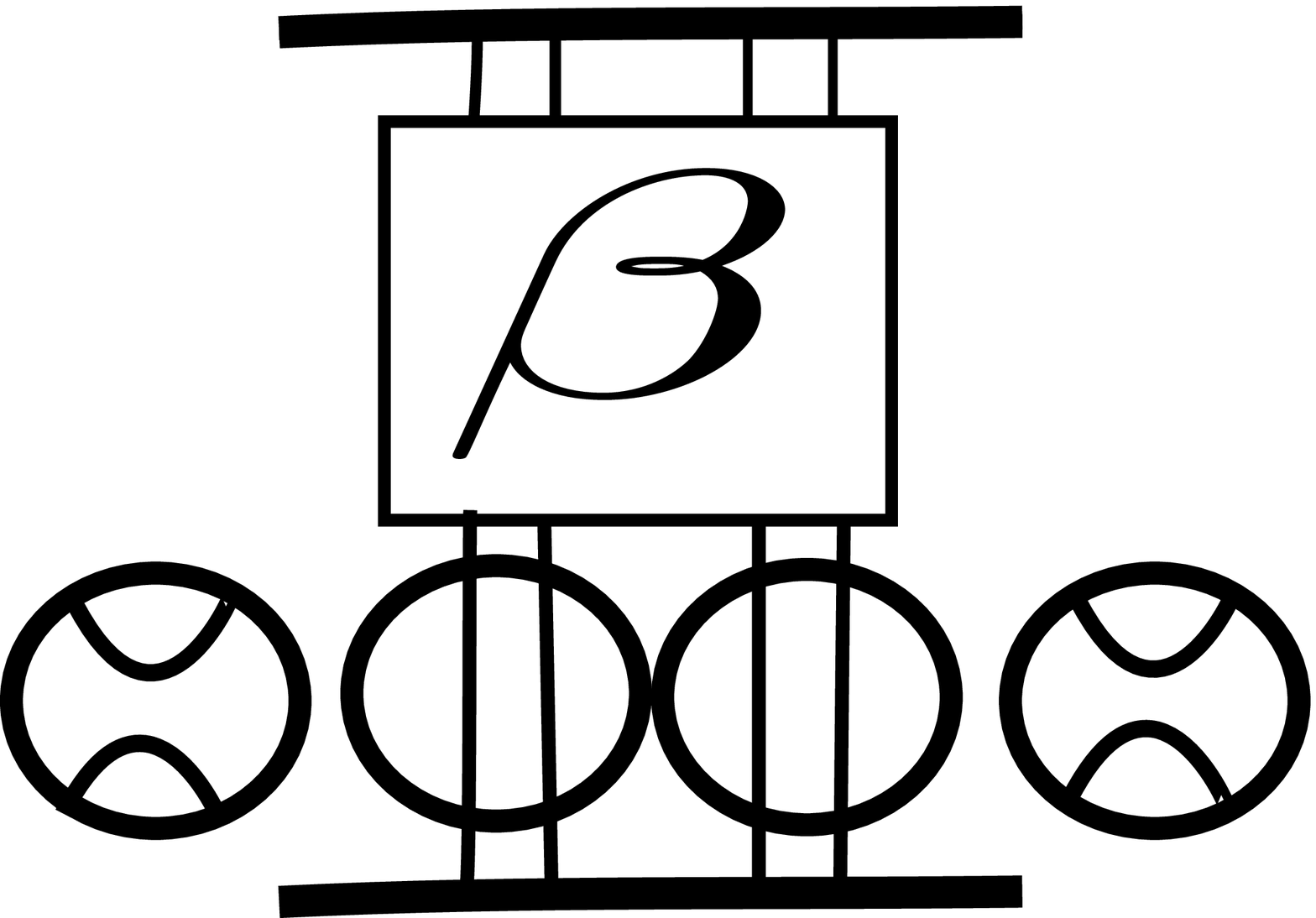}
}
\hspace{2 cm}
\subfigure[\label{fig:SolTwistedIBundles} Replacement of these rational tangles gives a null-homotopic two bridge link in~$S^3$.]{
\resizebox{3.2cm}{2.8cm}{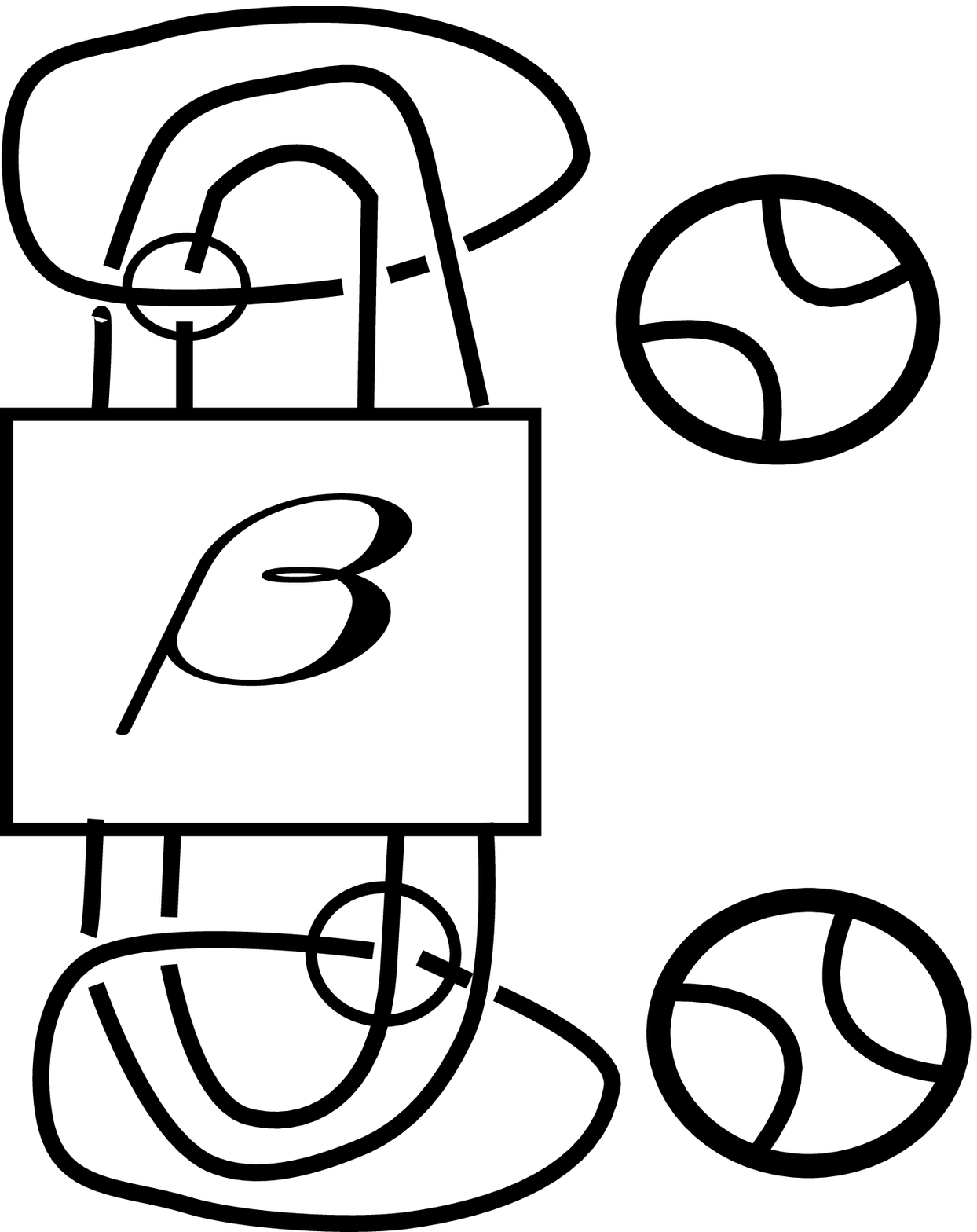}
}
\caption{\label{fig:TorusBundles} Rational Tangle replacements on quotients of solv manifolds.}
\end{center}
\end{figure}

\section{The subgraph for hyperbolic manifolds}\label{sect:hyperbolic_graph}

\begin{definition} 
Let $\thehypgraph$ be the subgraph of $\thegraph$ such that the vertices correspond to closed hyperbolic 3-manifolds, and there is an edge between two vertices $v_M$ and $v_N$ exactly when there is a one-cusped hyperbolic 3-manifold $P$ with two fillings homeomorphic to $M$ and $N$.  
\end{definition} 

Note that there is not necessarily a hyperbolic Dehn surgery between $M$ and $N$ in our definition. For example, the surgery knot might not be represented by embedded geodesics in $M$ and $N$. 

As mentioned above in Section \ref{sect:kinfty}, this part of the graph has the nice property that if two different $K_{\infty}$  graphs that arise as $M(K)$ and $M(K')$ intersect, they must do so in finitely many vertices.  We conjecture that the combinatorics of this subgraph may reveal more of geometry and topology than in the full graph.  For the same reasons as $\thegraph$,  $\thehypgraph$ is infinite valence and infinite diameter.  We show here that it is connected, using the work of Myers.  Let $Y$ be a compact orientable 3-manifold, possibly with boundary.  Following Myers, we say that $Y$ is {\it excellent} if it is irreducible and boundary irreducible, not a 3-ball, every properly embedded incompressible surface of zero Euler characteristic is isotopic into the boundary, and it contains a two-sided properly embedded incompressible surface.  These manifolds are known by Thurston  \cite[Theorem 1.2]{THURSTONDEFORM} to admit hyperbolic structures.   By slight abuse of notation, if a properly embedded 1-manifold $K \subset Y$ has an excellent exterior, we will call $K$ excellent.

\begin{theorem}\label{thm:excellent} (Myers)  Let $M$ be a compact connected 3-manifold whose boundary does not contain 2-spheres or projective planes.  Let $J$ be a compact properly embedded 1-manifold in $M$.  Then $J$ is homotopic rel $\partial J$ to an excellent 1-manifold $K$. \end{theorem}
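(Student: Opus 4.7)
The plan is to proceed locally with respect to a triangulation and then assemble the global excellent structure using Haken-style cut-and-paste arguments. First I would pick a triangulation $T$ of $M$ and, using general position and transversality, homotope $J$ rel $\partial J$ so that $J$ misses the 0-skeleton and 1-skeleton, meets each 2-simplex transversely in finitely many points (with $\partial J$ pushed just into $\partial M$-collars), and meets each 3-simplex $\sigma$ in a finite collection of properly embedded arcs with endpoints in the interiors of 2-faces. Because arcs in a 3-ball with fixed endpoints are all properly homotopic, any modification of $J$ inside each tetrahedron rel $\partial \sigma$ will extend to a homotopy of the whole 1-manifold rel $\partial J$.

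The key local step is to replace, inside each 3-simplex $\sigma$, the trivial collection of arcs $J \cap \sigma$ by a \emph{model excellent tangle} $T_\sigma \subset \sigma$ having the same endpoints on $\partial \sigma$ and such that the tangle exterior $\sigma \setminus N(T_\sigma)$ is irreducible, boundary-irreducible, atoroidal, anannular, and contains a two-sided properly embedded incompressible surface. Such tangles are the standard building blocks of Myers' construction: one knots each arc in a prescribed way (for instance by tying in a 2-bridge knot of sufficient complexity and linking arcs through one another), and a direct analysis of essential surfaces in the resulting tangle exterior shows excellence. Call the resulting 1-manifold $K$; by construction it is homotopic to $J$ rel $\partial J$.

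The hard part, and the main obstacle, is globalization: showing that the exterior $E = M \setminus N(K)$ is itself excellent once each local piece is. The 2-skeleton $T^{(2)}$ meets $E$ in a surface $\Sigma$ cutting $E$ into the local excellent pieces $E_\sigma = \sigma \setminus N(T_\sigma)$. Given any sphere, essential disk, essential annulus, or essential torus $S$ in $E$, I would put $S$ in general position with $\Sigma$ and run an innermost-disk / outermost-arc argument on $S \cap \Sigma$: each innermost disk of intersection lies in some $E_\sigma$, which is boundary-irreducible and has incompressible frontier in $E$, and can therefore be removed or used to compress $\Sigma$ trivially. After finitely many such simplifications, $S$ lies in a single $E_\sigma$, contradicting the local excellent property. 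The hypothesis that $\partial M$ contains no 2-spheres or projective planes enters precisely here, ensuring that no unwanted essential spheres or one-sided surfaces arise from the boundary pieces of $\Sigma$.

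Finally, excellence of $E$ requires a two-sided incompressible surface: one obtains this by taking a suitable subsurface of $\Sigma$, which is incompressible in $E$ by the same innermost-disk argument (any compressing disk could be pushed into a single $E_\sigma$ and removed), and is two-sided because each $E_\sigma$ is orientable. Thurston's geometrization for Haken manifolds then upgrades excellence to a hyperbolic structure on the interior of $E$, though for the statement as written only the combinatorial excellence conditions are needed.
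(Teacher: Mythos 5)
You should first note that the paper contains no proof of this statement at all: it is quoted as Myers' theorem and used as a black box, the only piece of Myers' machinery reproduced being his Gluing Lemma (Lemma \ref{lem:gluing}). So the honest comparison is with Myers' argument in \cite{MYERSEXCELLENT}, and your outline is a reconstruction of exactly that strategy (cut $M$ into balls, replace the trivial tangles by excellent model tangles, reassemble). As a proof, however, it has genuine gaps precisely at the two places where the real work lies. The local step is asserted rather than proved: producing a tangle in a 3-ball with prescribed endpoints whose exterior is irreducible, boundary-irreducible, atoroidal, anannular, and contains a two-sided essential surface is the technical core of Myers' papers, and ``tie in a 2-bridge knot and a direct analysis of essential surfaces shows excellence'' is a statement of what must be done, not an argument. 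Moreover the model tangles must be built so that the punctured faces of the tetrahedra are incompressible (and boundary-incompressible) in the local exteriors; this is an additional property you never verify but rely on later.

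The globalization step as written would fail. Gluing excellent pieces preserves excellence only under the hypotheses of Lemma \ref{lem:gluing}: every component of the splitting surface must be incompressible in the pieces, the complementary part of the boundary must be incompressible as well, and each component of the splitting surface must have negative Euler characteristic. In your set-up the splitting surface $\Sigma$ consists of 2-simplices punctured by $K$, and nothing in a general-position argument prevents a face from being met zero or one times; such a face contributes a disk or annulus component of $\Sigma$, and then the innermost-disk/outermost-arc scheme does not produce a contradiction: an essential torus or annulus can meet $\Sigma$ in circles that are essential on both sides (not removable by innermost disks), and boundary-parallel annular pieces can be slid across a once-punctured face, so ``after finitely many simplifications $S$ lies in a single $E_\sigma$'' is not justified. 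One must arrange that every gluing face is multiply punctured (or use a different decomposition, as Myers does) and then invoke the Gluing Lemma with its hypotheses checked, rather than an ad hoc cut-and-paste. Finally, the role of the hypothesis on $\partial M$ is not about ``boundary pieces of $\Sigma$'': a sphere or projective-plane component of $\partial M$ persists in the boundary of the exterior of any 1-manifold and obstructs excellence outright (a boundary sphere forces the exterior to be reducible or a 3-ball), which is why it must be excluded at the outset.
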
 

For the following lemma, we observe Myers' notation in stating the following technical lemma. Namely, let $X$ be a three manifold and $F$ be a compact, (possibly disconnected) properly embedded two sided surface in $X$. Let $Y$ be the manifold obtained by cutting along $F$ and let $F_1, F_2 \subset \partial Y$ be such that identifying $F_1$ and $F_2$ yields $X$.

\begin{lemma} (Myers' Gluing Lemma, \cite[Lemma 2.1]{MYERSEXCELLENT}) \label{lem:gluing} If each component of $Y$ is excellent, $F_1 \cup F_2$ and $cl(\partial Y  \setminus ( F_1 \cup F_2) )$ are incompressible in $Y$, and each component of $F_1 \cup F_2$ has negative Euler characteristic, then $X$ is excellent. 
\end{lemma}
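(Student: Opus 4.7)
The plan is to verify each of the five conditions defining excellence for $X$ by a standard cut-and-paste / innermost-disk argument: given an offending surface (2-sphere, compressing disk, or incompressible surface of zero Euler characteristic) $S$ in $X$, put $S$ in general position with the embedded surface $F \subset X$ arising from identifying $F_1$ with $F_2$, use the incompressibility hypotheses on $F_1 \cup F_2$ and on $\mathrm{cl}(\partial Y \setminus (F_1 \cup F_2))$ to eliminate curves and arcs of $S \cap F$, and thereby push $S$ into a single component $Y_i$ of $Y$, where excellence of $Y_i$ finishes the argument.

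I would begin with the easy conditions. For irreducibility, let $S \subset X$ be a $2$-sphere transverse to $F$. An innermost circle of $S \cap F$ on $S$ bounds a disk $D \subset S$; incompressibility of $F_1 \cup F_2$ in $Y$ shows $\partial D$ bounds a disk $D' \subset F$; irreducibility of the component of $Y$ containing the sphere $D \cup D'$ lets us isotope $S$ across the resulting ball and strictly reduce $|S \cap F|$. After finitely many such steps $S$ lies in a single $Y_i$ and bounds a ball there. Compressing disks for $\partial X$ are handled identically, using both incompressibility hypotheses to kill circles and arcs of the intersection with $F$, then invoking boundary-irreducibility of $Y_i$. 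That $X$ is not a $3$-ball follows because $F$ itself is a nonempty two-sided incompressible surface in $X$ of negative Euler characteristic (and a $3$-ball contains no such surface), which simultaneously supplies the final defining condition of excellence.

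The main obstacle is the ``anannular/atoroidal'' condition: any properly embedded incompressible surface $T \subset X$ of zero Euler characteristic must be isotopic into $\partial X$. Put $T$ in general position with $F$ and simplify $T \cap F$ so that each remaining intersection curve is essential on both surfaces; curves inessential on $T$ or on $F$ can be removed as before, now using both that $T$ is incompressible in $X$ and that $F$ is incompressible in $Y$. The delicate point is ruling out essential intersections. If $T \cap F \neq \emptyset$ after simplification, cutting $T$ along $T \cap F$ produces incompressible zero-Euler-characteristic pieces in the components of $Y$; by excellence each piece is parallel into $\partial Y_i$, and because the piece meets $F$ essentially along its boundary, the parallelism region must be a product joining the piece to a subsurface of $F_1 \cup F_2$ — but this is impossible, since the components of $F_1 \cup F_2$ have negative Euler characteristic while each piece has zero Euler characteristic, and no product parallelism exists between surfaces of different Euler characteristic. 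Hence $T$ can be isotoped disjoint from $F$; it then lies in some $Y_i$ and, by excellence, is isotopic into $\partial Y_i$. The same Euler-characteristic mismatch forces the target subsurface to lie in $\mathrm{cl}(\partial Y \setminus (F_1 \cup F_2)) \subset \partial X$, so $T$ is isotopic into $\partial X$, completing the verification that $X$ is excellent.
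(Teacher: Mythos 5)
First, a point of comparison: the paper itself gives no proof of this statement; it is quoted verbatim from Myers (\cite[Lemma 2.1]{MYERSEXCELLENT}) and used as a black box, so what you are attempting is a reconstruction of Myers' argument. Your overall cut-and-paste strategy (innermost circles/outermost arcs against $F$, then excellence of the pieces of $Y$) is indeed the right framework, and the irreducibility step is fine. But there is a genuine gap at the step you yourself identify as delicate, the elimination of essential curves of $T\cap F$ for an incompressible $T$ with $\chi(T)=0$. Your claimed contradiction --- that a boundary-parallelism of a zero-Euler-characteristic piece would have to be ``a product joining the piece to a subsurface of $F_1\cup F_2$,'' impossible because the components of $F_1\cup F_2$ have negative Euler characteristic --- does not hold. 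Boundary-parallelism of an annulus piece $P\subset Y_i$ is a parallelism onto an annulus subsurface $A'$ of $\partial Y_i$, and such an $A'$ can perfectly well be an annular neighborhood of an essential circle inside a negative-Euler-characteristic component of $F_1\cup F_2$; there is no Euler-characteristic obstruction, because the parallelism target is a subsurface, not a whole component. So excellence of $Y_i$ does not rule this configuration out; rather, this is precisely the case in which one must use the product region (chosen innermost, i.e.\ disjoint from the rest of $T$) to isotope $T$ across $F$ and reduce $\lvert T\cap F\rvert$, with further case analysis when $A'$ meets $\partial F$ or when a piece has boundary on both $F$ and $\partial X$. Moreover, when $T$ is an annulus its boundary may meet $\partial F$, so $T\cap F$ can contain arcs; cutting along arcs produces disks, not zero-Euler-characteristic pieces, so your dichotomy does not even apply to them, and removing them requires a separate outermost-arc argument exploiting incompressibility of $\mathrm{cl}(\partial Y\setminus(F_1\cup F_2))$ together with boundary-irreducibility of $Y$. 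The same omission is hidden in your boundary-irreducibility step, where ``handled identically'' glosses over the essential-arc case, which incompressibility of $F_1\cup F_2$ alone does not dispose of.

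Your final localization step is correct in spirit but for the wrong stated reason. Once $T\cap F=\emptyset$ and $T$ is parallel in $Y_i$ to a subsurface $A'\subset\partial Y_i$ with $\partial A'=\partial T$ disjoint from $F_1\cup F_2$, the right argument is a containment one: any component of $F_1\cup F_2$ meeting $A'$ would have to lie entirely inside $A'$ (its boundary cannot cross $\partial A'$), which is impossible for the annulus case since every compact subsurface of an annulus has nonnegative Euler characteristic, and is ruled out in the closed-torus case by combining the negative Euler characteristic of $F_1\cup F_2$ with incompressibility of the complementary part of $\partial Y$. It is this argument, not a general principle that ``no product parallelism exists between surfaces of different Euler characteristic,'' that forces $A'\subset\mathrm{cl}(\partial Y\setminus(F_1\cup F_2))$ and hence $T$ isotopic into $\partial X$. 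As written, then, the proposal does not establish the annular/toroidal condition; repairing it along the lines above is essentially the actual content of Myers' lemma.
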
 

\begin{theorem} 
Suppose $M_0$ and $M_{n}$ are closed hyperbolic 3-manifolds such that the associated vertices $v_{M_0}$ and $v_{M_{n}}$ are connected via a path of length $n$ in $\thegraph$.  Then $v_{M_0}$ and $v_{M_{n}}$ are connected via a path of length $n+2$ in $\thehypgraph$. \end{theorem}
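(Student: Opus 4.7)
The plan is to realize the entire surgery path as Dehn fillings of a single many-cusped hyperbolic manifold, obtained by adjoining one carefully chosen auxiliary knot $J$ to the surgery link. Writing $K_i\subset M_i$ for the surgery knot with $M_i(K_i;r_i)=M_{i+1}$, I would first isotope each $K_i$ off the core of the solid torus that filled $M_{i-1}\setminus K_{i-1}$ to produce $M_i$, so that $L=K_0\cup\cdots\cup K_{n-1}$ may be viewed as a link in $M_0$ with $M_i$ recovered from $M_0\setminus L$ by filling $K_j$ along $r_j$ for $j<i$ and along the meridian $m_j$ for $j\geq i$.

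Next, using Theorem~\ref{thm:excellent} (Myers) and Lemma~\ref{lem:gluing} (Gluing), I would choose a knot (or link) $J\subset M_0\setminus L$ so that the three manifolds $M_0\setminus J$, $Y:=M_0\setminus(L\cup J)$, and $M_n\setminus J^{(n)}$ are all hyperbolic, where $J^{(n)}$ denotes the image of $J$ in $M_n$ after the $r_j$-fillings. Excellence of $J$ in $M_0$ (Myers applied to $M_0$, then general position relative to $L$) gives the first condition, the Gluing Lemma applied relative to the $L$-boundary yields the second, and a symmetric argument in $M_n$ gives the third; further auxiliary components may be added to kill essential surfaces that appear after the partial $r_j$-fillings.

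With $Y$ a hyperbolic $(n{+}1)$-cusped manifold, for each $0\leq i\leq n-1$ let $Y_i$ be the one-cusped manifold obtained from $Y$ by filling $K_j$ along $r_j$ for $j<i$, along $m_j$ for $j>i$, and $J$ along a common slope $s$; filling the remaining $K_i$-cusp along $m_i$ or $r_i$ gives closed manifolds $\tilde M_i$ and $\tilde M_{i+1}$ respectively. By Thurston's Hyperbolic Dehn Surgery Theorem, for all but finitely many $s$ every $Y_i$ is hyperbolic, every $\tilde M_i$ is hyperbolic, and the closed manifolds $\tilde M_0=M_0(J;s)$ and $\tilde M_n=M_n(J^{(n)};s)$ are hyperbolic; fix such an $s$. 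The resulting path
\[
M_0\longrightarrow\tilde M_0\longrightarrow\tilde M_1\longrightarrow\cdots\longrightarrow\tilde M_n\longrightarrow M_n
\]
has length $n+2$ in $\thehypgraph$: its first edge is witnessed by $M_0\setminus J$ (fillings $m_J$ and $s$), each middle edge by $Y_i$ (fillings $m_i$ and $r_i$), and its last edge by $M_n\setminus J^{(n)}$.

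The main obstacle will be the construction of $J$: arranging simultaneous hyperbolicity of $M_0\setminus J$, $Y$, and $M_n\setminus J^{(n)}$ calls for careful iterated use of Myers' Gluing Lemma relative to different sets of boundary tori, and may require several auxiliary components to kill essential surfaces produced by the specific $r_j$-fillings. Once such a $J$ is in hand, the remainder is a pigeonhole argument via HDS: the bad slopes for each of the finitely many required hyperbolicity statements form a finite set, so a single good $s$ exists that makes every vertex and every edge of the proposed path lie in $\thehypgraph$.
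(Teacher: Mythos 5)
Your overall plan coincides with the paper's: adjoin one auxiliary knot to the surgery link, realize the whole path by fillings of the enlarged link exterior, and save Thurston's Hyperbolic Dehn Surgery Theorem for the single free slope on the auxiliary cusp. The gap is in the step where you invoke that theorem. You only arrange hyperbolicity of three manifolds, $M_0\setminus J$, $Y=M_0\setminus(L\cup J)$, and $M_n\setminus J^{(n)}$, and then claim that for all but finitely many $s$ every $Y_i$ and every $\tilde M_i$ is hyperbolic. The theorem does not give this: the slopes $r_j$ and $m_j$ on the $K_j$-cusps are \emph{fixed in advance}, so before $s$ can be varied you need the partially filled manifolds along those fixed slopes to be hyperbolic already --- namely the two-cusped manifolds $Z_i$ obtained from $Y$ by filling every $K_j$-cusp with $j\neq i$ along its prescribed slope (leaving $K_i$ and $J$ open), and the one-cusped manifolds $M_i\setminus J^{(i)}$ obtained by filling all the $K_j$-cusps. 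Hyperbolicity of $Y$ says nothing about these particular fillings; they could be reducible, toroidal, or Seifert fibered, in which case no choice of $s$ makes $Y_i$ or $\tilde M_i$ hyperbolic. This is exactly the difficulty the paper's proof is organized around: the auxiliary knot is built to cross a collar $T^2\times I$ of each $\partial N(a_i)$ along an arc produced by Lemma \ref{lem:greattangle}, where the arc is ``wrapped'' by twist homeomorphisms of $T^2\times I$ precisely so that its exterior stays excellent after filling along the two prescribed slopes $\alpha_i,\beta_i$; Myers' Gluing Lemma \ref{lem:gluing} then shows that \emph{every} mixed filling of the $a_i$-cusps with the auxiliary cusp left open is excellent, hence hyperbolic, and only at that point is the Dehn surgery theorem applied to the free slope.

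You do flag this obstacle, but the remedy you suggest --- adding further auxiliary components to kill essential surfaces created by the fixed fillings --- would undermine the length count: if $J$ has $c>1$ components, then $M_0\setminus J$ is not one-cusped, so passing from $M_0$ to $M_0(J;s)$ is no longer a single edge of \thehypgraph\ (and likewise at the $M_n$ end), and the path would exceed $n+2$. So the missing ingredient is a construction making a \emph{single} auxiliary knot compatible with the prescribed slopes on all components simultaneously; that is the content of Lemma \ref{lem:greattangle} combined with the gluing argument, and without it the central application of the Hyperbolic Dehn Surgery Theorem in your proposal does not go through.
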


\begin{proof} 
Observe that under this hypothesis, there is an n-component link, $\lbrace a_1,..., a_n \rbrace$ in $M_0$ and closed manifolds $M_1, ..., M_n$ such that 


$$M_i= M_0 (\lbrace a_1,...,a_n\rbrace;(\beta_1, \beta_2,...,\beta_i,\alpha_{i+1},..., \alpha_n)), i \in \lbrace 0,..,n\rbrace$$

We will find a knot $k$ in $M_0 \setminus \lbrace a_1,...,a_n \rbrace$ and a slope $r$ such that the closed manifolds 

$$N_i= M_0 ( \lbrace a_1,...,a_n,k\rbrace;(\beta_1,...\beta_{i},\alpha_{i+1},...,\alpha_n,r)), i \in \lbrace 0,..,n\rbrace$$ are hyperbolic.  Each $N_i$ is obtained from $M_i$ via Dehn surgery on $k$ with slope $r$. The knot $k$ and slope $r$  will also have the property that the 1-cusped manifolds 

$$P_i = M_0(\lbrace a_1,...,a_n,k \rbrace;(\beta_1,...,\beta_{i-1}, -,\alpha_{i+1}...,\alpha_n,r)),$$
$$Q_0= M_0 ( \lbrace a_1,...,a_n,k \rbrace; (\alpha_1,...,\alpha_n,-)), \mbox{and } Q_n = M_0 ( \lbrace a_1,...,a_n,k \rbrace; (\beta_1,...,\beta_n,-))$$

\noindent are hyperbolic. 
We will use Myers' Theorem \ref{thm:excellent} and Lemma \ref{lem:gluing}, stated above.  We will also use the fact, proven in Lemma \ref{lem:greattangle}, that, given a $T^2 \times I$ and two slopes $x$ and $y$ on $T^2 \times \lbrace 0 \rbrace$, there is an arc $A$ in  $T^2 \times I$ with endpoints on $T^2 \times \lbrace1\rbrace$ such that the exterior $H_A$ of $A$ in $T^2 \times I$ is excellent.  Furthermore, the results of Dehn filling $H_A$ along the slopes $x$ and $y$ are excellent. 

Now we prove the existence of a knot $k$ in the exterior of the link $\lbrace a_1,...,a_n\rbrace$ in $M_0$ with the desired properties. First fix a homeomorphism $h_i$ of a neighborhood $N(\partial N(a_i))$ of each $\partial N(a_i)$ with $T^2 \times I $. For each component $a_i$, we construct an arc $A_i$ in $T^2 \times I$ such that: 
(i) $\partial A_i \subset T^2 \times \lbrace {1} \rbrace$; 
(ii) the exterior of $A_i$ in $T^2 \times I $ is excellent;
and (iii) the results of filling the exterior of $A_i$ along the slopes $h_i(\alpha_i)$ and $h_i(\beta_i)$ on $T^2 \times \lbrace 0 \rbrace$ are excellent. 
This is done in Lemma \ref{lem:greattangle} below. 
 Now by Myers' Theorem, stated as \ref{thm:excellent} above, there is an excellent collection of arcs $\lbrace B_i \rbrace$ in $M_0 \setminus \lbrace N(N(a_i)) \rbrace$ such that $B_i$  connects an endpoint of $A_i$ to one of $A_{i+1} \mod n$.  Then we claim the following: 
 
 \begin{enumerate} 
 \item $k = \cup_n (A_i \cup B_i)$ is an excellent knot in $M_0 \setminus \lbrace N(a_i) \rbrace$. 
 \item The result of filling along any choice of $\alpha_i$ or $\beta_i$ for any subset of the $a_i$ is excellent.  
 \end{enumerate} 
The fact that the union of arcs in (1) above is a knot follows from the recipe.  The fact that the exterior in (1) is excellent follows from  Myers' Lemma \ref{lem:gluing} above and the fact that each $T^2 \times I \setminus N(A_i)$ is excellent and that the exterior of the union of the $B_i$ is excellent.  Similarly,  since each $T^2 \times I \setminus N(A_i) $ filled along $\alpha_i$ or $\beta_i$ is excellent, Myers' gluing Lemma \ref{lem:gluing} yields that filling  any subset of the $a_i$ along $\alpha_i$ or $\beta_i$ is excellent.  Thus, in particular, $Q_0$ and $Q_n$ above are hyperbolic. 

Let $k$ be a knot in $M_0 \setminus \lbrace N(a_i) \rbrace$ having property (1) above. Choose a slope $r$ on $\partial N(k)$ such that $r$ lies outside of the finite set of slopes that makes any one of the closed manifolds $N_i$ or the cusped manifolds $P_i$ not hyperbolic. 

Then the path $M_0, Q_0, N_0, P_1, N_1, P_2, ...., N_{n}, Q_n, M_{n}$ is a path in $\thehypgraph$ connecting $v_{M_0}$ and $v_{M_n}$.   Here the $M_i$ and $N_i$ are closed hyperbolic manifolds (represented by vertices in $\thehypgraph$) and the $P_i$ and $Q_i$ are cusped hyperbolic manifolds (represented by edges in $\thehypgraph$). 
\end{proof}

\begin{lemma} \label{lem:greattangle} 
Given $T^2 \times [0,1]$ and two isotopy classes of curves $x$ and $y$ on $T^2 \times \lbrace 0 \rbrace$, there is an arc $A$ with endpoints on $T^2 \times \lbrace 1 \rbrace$ such that:  
\begin{enumerate} 
\item $T^2 \times I \setminus N(A)$ is excellent. 
\item The results of filling $T^2 \times I \setminus N(A)$ along the slopes $x$ and $y$ are excellent. 
\end{enumerate} 
  
\end{lemma}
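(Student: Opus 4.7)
The plan is to combine Myers' Excellent $1$-Manifold Theorem \ref{thm:excellent} with Thurston's hyperbolization of excellent 3-manifolds \cite{THURSTONDEFORM} and his Hyperbolic Dehn Surgery Theorem \cite[Theorem~5.8.2]{THURSTONNOTES}. The new ingredient I would add is a symmetry argument using the $SL(2,\mathbb{Z})$-action on slopes of $T^2 \times \{0\}$ to push the (finite) set of exceptional Dehn-filling slopes off of the two prescribed slopes $x$ and $y$.

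First, apply Myers' Theorem \ref{thm:excellent} to $T^2 \times I$ starting from any properly embedded arc $A_0$ with both endpoints on $T^2 \times \{1\}$. Since $T^2 \times I$ has only torus boundary components, this produces an excellent arc $A$ homotopic to $A_0$ rel boundary, which establishes~(1). By \cite{THURSTONDEFORM}, $Y := (T^2 \times I) \setminus N(A)$ admits a finite-volume hyperbolic structure with $T^2 \times \{0\}$ appearing as a torus cusp (and the genus-$2$ surface on the top side as totally geodesic boundary). Thurston's Hyperbolic Dehn Surgery Theorem then furnishes a finite set $\mathcal{B}_A$ of exceptional slopes on $T^2 \times \{0\}$ outside of which every filling of $Y$ is hyperbolic, and after checking the remaining acylindricity and incompressibility conditions for hyperbolic fillings, excellent.

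If $\{x,y\} \cap \mathcal{B}_A = \emptyset$ we are done; otherwise, choose $\psi \in MCG(T^2) \cong SL(2,\mathbb{Z})$ with $\psi(\mathcal{B}_A) \cap \{x,y\} = \emptyset$. Such $\psi$ exists because the $SL(2,\mathbb{Z})$-action on $\mathbb{P}^1(\mathbb{Q})$ is transitive with infinite orbits, so the finitely many ``bad'' subsets $\{\psi \in SL(2,\mathbb{Z}) : \psi(b) \in \{x,y\}\}$ for $b \in \mathcal{B}_A$ cannot exhaust $SL(2,\mathbb{Z})$. Define $\phi \co T^2 \times I \to T^2 \times I$ by $\phi(p,t) = (\psi(p), t)$ and set $A' := \phi(A)$. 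Then $(T^2 \times I) \setminus N(A') = \phi(Y)$ is excellent (being the image of $Y$ under a self-homeomorphism of $T^2 \times I$), and its set of exceptional slopes on $T^2 \times \{0\}$ is precisely $\psi(\mathcal{B}_A)$, which by construction avoids both $x$ and $y$. This gives~(2) for $A'$.

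The main obstacle is the simultaneity requirement: a single arc must have excellent complement and excellent fillings along both of the two prescribed slopes. The $MCG(T^2)$-symmetry step bypasses this cleanly, since we do not need to engineer an excellent arc whose exceptional set directly avoids $\{x,y\}$: we just invoke Myers for any excellent arc and then apply an ambient homeomorphism of $T^2 \times I$ to shift its (finite) exceptional set off the two prescribed slopes.
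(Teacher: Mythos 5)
Your core idea coincides with the paper's: start from a Myers excellent arc and then move it by a self-homeomorphism of $T^2\times I$ so that the finitely many bad filling slopes on $T^2\times\lbrace 0\rbrace$ miss $\lbrace x,y\rbrace$. What you present as the new ingredient --- realizing a suitable element of $MCG(T^2)\cong SL(2,\mathbb{Z})$ by an ambient homeomorphism $\psi\times\mathrm{id}$ and transporting the exceptional set --- is exactly the role of the paper's ``wrapping'' maps $f_m$ and $f_l$ (twists along vertical annuli, composed as $f_m^nf_l^p$), so this part is the same route, not a different one. (A minor quibble: ``finitely many bad subsets cannot exhaust $SL(2,\mathbb{Z})$'' needs a word, since each bad subset is an infinite coset of a slope stabilizer; it is easily fixed, e.g.\ by taking a large power of a Dehn twist about a slope outside $\mathcal{B}_A$, or by B.~H.~Neumann's lemma on covering a group by finitely many cosets.)

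The genuine gap is the finiteness-and-excellence step. The exterior $Y=(T^2\times I)\setminus N(A)$ is not a finite-volume cusped hyperbolic manifold: besides the torus $T^2\times\lbrace 0\rbrace$, its boundary contains a genus-two surface, so Thurston's Hyperbolic Dehn Surgery Theorem as you cite it (Theorem 5.8.2 of the notes, stated for complete finite-volume cusped manifolds) does not apply to $Y$ verbatim; and ``after checking the remaining acylindricity and incompressibility conditions'' is a placeholder rather than an argument that all but finitely many fillings are \emph{excellent}, which is what the lemma asserts and what Myers' Gluing Lemma needs in the main theorem. A Dehn-filling theorem for a one-cusped manifold with totally geodesic boundary is true, but supplying it is precisely the content of the paper's proof: it doubles $H_E$ along the subsurface $D=T^2\times\lbrace 1\rbrace\setminus N(\partial E)$ (incompressible by construction), uses Myers' Gluing Lemma to see that the double $DH_E$ is excellent and hence a finite-volume cusped hyperbolic manifold, applies Theorem 5.8.2 to \emph{symmetric} fillings of $DH_E$ (the two filling curves have equal length, so only finitely many symmetric fillings are exceptional), observes that $f_m,f_l$ extend to $DH_E$ and preserve symmetric slopes, and then undoubles, reading off the desired properties of the filled arc exterior from the doubled picture. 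To complete your proof you should either reproduce this doubling argument or cite a result that genuinely covers hyperbolic Dehn filling in the presence of geodesic boundary, and in either case still argue excellence, not merely hyperbolicity, of the generic filling.
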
 

\begin{proof}
 By Myers' Theorem \ref{thm:excellent}, there exists an arc $E$ in $T^2 \times I$ with endpoints on $T^2 \times \lbrace 1 \rbrace$  such that the exterior $T^2 \times I \setminus N(E)$ is excellent.   The arc we will use is $E$, wrapped around enough to make filling along 2 specified slopes $x$ and $y$ hyperbolic.  We detail this wrapping around below. 
 
 Fix $T^2 \times I$ up to isotopy.  Let $m$ be an oriented slope on $T^2 \times \lbrace 0 \rbrace$. Let $A_m$ be an essential annulus bounded by $m$ and a curve $m'$ on $T^2 \times \lbrace 1 \rbrace$. Let $l$ be a slope that has intersection number 1 with $m$.  There are homeomorphisms $f_m, f_l:T^2 \times I \rightarrow T^2 \times I$ obtained by cutting along $A_m$ and $A_l$, twisting once, and then gluing back by the identity on this annulus.  We twist so that an oriented $f_m(pm + ql) = pm + (q+1)l$ and $f_l(pm + ql)=  (p+1)m + ql$, in the original isotopy class of $T^2 \times I$.  Furthermore, given an $n \in \mathbb{N}$ and an oriented slope $t$, there is an $f$, which is a composition of $f_m$ and $f_l$ such that the oriented intersection of $t$ and $m$ and $t$ and $l$ is larger than $n$.  
 
Now let $H_E$ be the exterior of $E$ in $T^2 \times I$.  There is a subsurface $D=T^2 \times \lbrace 1 \rbrace \setminus N(\partial E)$ of the boundary such that it and its complement are incompressible in $H_E$.  Thus we may apply Myers' Gluing Lemma (Lemma \ref{lem:gluing}) to the double along $D$, $DH_E$ and conclude that it is excellent, hence hyperbolic.  The manifold $DH_E$ is the exterior of a knot in  $T^2 \times [0,2]$.  We say that filling along the components $T^2 \times \lbrace 0 \rbrace $ and $T^2 \times \lbrace 2 \rbrace$ such that the filling is the double along $D$ of a filling of $H_E$ is a {\it symmetric} filling.  Then, by Thurston's Hyperbolic Dehn Surgery Theorem \cite[Theorem 5.8.2]{THURSTONNOTES}, all but finitely many symmetric fillings of $DH_E$ are hyperbolic.  (Note that the filling curves have the same length in the complete structure on $DH_E$) The maps $f_m$ and $f_l$ extend naturally to $DH_E$ (by restriction to $H_E$ and doubling) and take symmetric slopes to symmetric slopes.  Thus there is a map $f: DH_E \rightarrow DH_E$, which can be taken to be of the form $f_m^nf_l^p$, such that filling $DH_E$ symmetrically along $f^{-1}(x)$ and $f^{-1}(y)$ is hyperbolic.  Then the arc $A = f(E)$ in $T^2 \times I$ has the property that filling along $x$ and $y$ is hyperbolic. Indeed doubling the exterior of $A$ results in $f(DH_E)$ which is hyperbolic when symmetrically Dehn filled along $x$ and $y$. 
\end{proof}

\section{Obstructions to $\delta-$hyperbolicity}\label{sect:flats}

We recall the following definitions. 
A geodesic metric space is $\delta$-hyperbolic if every geodesic triangle is ``$\delta$ thin", that is, every side is contained in a $\delta$-neighborhood of the union of the other 2 sides. Two metric spaces $X,Y$ with metrics $\mu_X, \mu_Y$ are \emph{quasi-isometric} 
if there exists a function $f:X \rightarrow Y$ and $A \geq1, B \geq 0, C \geq 0$ such that (1) for all $x_1,x_2 \in X$, $\frac{1}{A} \mu_Y(f(x_1,x_2)) - B \leq 
\mu_X(x_1,x_2) \leq A\cdot \mu_Y(f_1(x_1,x_2)) + B$ and (2) every point of $Y$ lies in the $C$-neighborhood of $f(X)$.  A \emph{k-quasi-flat} in a metric space $X$ is a subset of $X$ that is quasi-isometric to $\mathbb{E}^k$. In this section we will construct quasi-flats in $\thegraph$ and $\thehypgraph$, showing that these spaces are not $\delta$-hyperbolic. 

We will need to compute the exact distance in some simple examples.  To do so, we first give a method for a lower bound on the distance. 
\begin{lemma}\label{lem:AlgebraicLemma}
 Let $M_1$ and $M_2$ be closed orientable 3-manifolds and let $0\leq m \leq n$ and $p$ a prime. 
 If $\pi_1(M_1) \twoheadrightarrow \left( \Z/p\Z \right)^n$ and 
 $\pi_1(M_2) \twoheadrightarrow \left( \Z/p\Z \right)^m$ but 
 $\pi_1(M_2) \not\twoheadrightarrow \left( \Z/p\Z \right)^{m+1}$, 
 then $$p_L(M_1,M_2) \geq n-m.$$
 \end{lemma}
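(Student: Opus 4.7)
The plan is to associate to every closed orientable $3$-manifold $M$ the invariant
$$f_p(M) \;=\; \max\bigl\{\,k \ge 0 \;:\; \pi_1(M) \twoheadrightarrow (\Z/p\Z)^k\,\bigr\},$$
which is finite because $H_1(M;\Z)$ is finitely generated and every surjection onto an abelian group factors through it. In this language the hypotheses say $f_p(M_1)\ge n$ and $f_p(M_2)=m$, so it suffices to prove that $f_p$ is $1$-Lipschitz on $\thegraph$: for any edge $v_Mv_{M'}$,
$$|f_p(M)-f_p(M')|\le 1.$$
Iterating along a geodesic of length $p_L(M_1,M_2)$ in $\thegraph$ then gives $n-m\le f_p(M_1)-f_p(M_2)\le p_L(M_1,M_2)$.

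To prove the $1$-Lipschitz claim, suppose $M'=M(K;r)$ for some knot $K\subset M$, and let $M_0=M(K;-)$ denote the common exterior. Both $\pi_1(M)$ and $\pi_1(M')$ arise from $\pi_1(M_0)$ by killing the normal closure of a single curve on $\partial M_0$ (the meridian of $K$, respectively the filling slope $r$). I would first observe that because any target $A=(\Z/p\Z)^k$ is abelian, a surjection $\pi_1(M_0)\twoheadrightarrow A$ descends to the quotient $\pi_1(M_0)/\langle\langle\gamma\rangle\rangle$ after modding out only by the image of the single element $\gamma$, so the rank of the quotient as an $\mathbb{F}_p$-vector space drops by at most $1$. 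This gives $f_p(M_0)-1\le f_p(M)\le f_p(M_0)$ and similarly for $M'$. Combining the two bounds yields $|f_p(M)-f_p(M')|\le 1$, as required.

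The core of the argument is elementary linear algebra over $\mathbb{F}_p$; the only place one has to be careful is the reduction from the normal closure $\langle\langle\gamma\rangle\rangle$ to the cyclic subgroup generated by the image of $\gamma$, which works precisely because the target is abelian. There are no estimates or geometric inputs needed beyond the definition of an edge in $\thegraph$, and the inductive step is immediate. I therefore do not expect any genuine obstacle; the main point is simply to isolate the right numerical invariant $f_p$ and verify that drilling and filling together change it by at most one.
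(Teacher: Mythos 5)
Your proposal is correct and follows essentially the same route as the paper: the paper's proof also reduces to showing that the maximal $k$ with $\pi_1 \twoheadrightarrow (\Z/p\Z)^k$ changes by at most one under a single Dehn surgery, by comparing both closed manifolds to the knot exterior and noting that killing one element drops the $\mathbb{F}_p$-rank of the image by at most one. Your explicit invariant $f_p$ and the $1$-Lipschitz phrasing are just a cleaner packaging of the identical argument.
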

 
 \begin{proof}
Let $K$ be  a knot in a closed manifold $M$, and let $w$ be a word in $\pi_1(M( K;-))$. We claim that if $\phi: \pi_1(M(K;-)) \rightarrow \left( \Z/p\Z \right)^n$ is a surjection, then $\phi$ induces a surjection $\phi'$ from $\pi_1(M( K;-)/\langle\langle w\rangle\rangle$  to $\left( \Z/p\Z \right)^n$ or $\left( \Z/p\Z \right)^{n-1}$. Indeed, the image of $w$ under $\phi$ is either trivial or non-trivial.  If it is trivial, then $\phi$ induces a surjection $\phi': \pi_1(M( K;-))/\langle\langle w\rangle\rangle$  to $\left( \Z/p\Z \right)^n = \left( \Z/p\Z \right)^n/\langle\langle\phi(w)\rangle\rangle$.  If $\phi(w)$ is non-trivial, then it is order $p$ in $\left( \Z/p\Z \right)^n$, since every element is order $p$.  Then there is a minimal generating set of $\left( \Z/p\Z \right)^n$ where $\phi(w)$ is a basis element. Then $\phi': \pi_1(M( K;-))/\langle\langle w\rangle\rangle \rightarrow \left( \Z/p\Z \right)^{(n-1)}  = \left( \Z/p\Z \right)^n/\langle\langle\phi(w)\rangle\rangle$ is a surjection.  This proves the claim.   

We note that if $\pi_1(M( K;-))/\langle\langle w\rangle\rangle$ 
surjects $\left( \Z/p\Z \right)^n$, then $\pi_1(M( K;-)))$ does as 
well, since there is a presentation of the two groups which differs 
only by the addition of a relation. Then the claim implies that the 
maximum $n$ such that $\pi_1(N)$ surjects $\left( \Z/p\Z \right)^n$ 
can change by at most 1 under the operation of Dehn surgery 
along a knot in $N$, and the lemma follows. 
 \end{proof}

\begin{theorem}\label{thm:notDeltaHyp}
$\thegraph$ contains a 2-quasi-flat.  Hence $\thegraph$ is not $\delta$-hyperbolic. \end{theorem}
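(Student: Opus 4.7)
The plan is to construct a quasi-isometric embedding $\mathbb{Z}^2 \hookrightarrow \thegraph$ using connect sums of lens spaces of coprime orders. Fix two distinct primes $p \neq q$ and for each $(a,b) \in \N \times \N$ let
\[
M_{a,b} \;=\; \underbrace{L(p,1)\#\cdots\# L(p,1)}_{a}\;\#\;\underbrace{L(q,1)\#\cdots\# L(q,1)}_{b},
\]
with the convention $M_{0,0}=S^3$. I expect the map $(a,b)\mapsto v_{M_{a,b}}$ to be a quasi-isometric embedding whose image is a 2-quasi-flat. Since no $\delta$-hyperbolic geodesic metric space contains a quasi-isometrically embedded $\E^2$ (the standard triangles in a flat are not $\delta$-thin), this will immediately establish the theorem.

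For the upper bound on distance I would show that one can adjust either coordinate by $\pm 1$ via a single Dehn surgery. To pass from $M_{a,b}$ to $M_{a+1,b}$: take an unknot $U$ inside a small ball of $M_{a,b}$; the surgery $M_{a,b}(U;p)$ replaces the ball by $L(p,1)\setminus B^3$, producing $M_{a+1,b}$. To pass from $M_{a+1,b}$ back to $M_{a,b}$: the core $c$ of the solid torus glued in to form the extra $L(p,1)$ summand is a knot in $M_{a+1,b}$, and the dual slope on $\partial N(c)$ undoes the filling, replacing the $L(p,1)$ summand with $S^3$. The same construction works for $L(q,1)$. Consequently
\[
p_L(M_{a,b},M_{a',b'}) \;\le\; |a-a'|+|b-b'|.
\]

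For the lower bound I apply Lemma \ref{lem:AlgebraicLemma}. Since $H_1(M_{a,b};\Z)\cong (\Z/p\Z)^a\oplus (\Z/q\Z)^b$ and $\gcd(p,q)=1$, any abelian quotient of $\pi_1(M_{a,b})$ factors through this splitting, so $\pi_1(M_{a,b})$ surjects $(\Z/p\Z)^a$ but not $(\Z/p\Z)^{a+1}$, and likewise surjects $(\Z/q\Z)^b$ but not $(\Z/q\Z)^{b+1}$. Applying the lemma once with prime $p$ and once with prime $q$ gives $p_L(M_{a,b},M_{a',b'})\ge |a-a'|$ and $p_L(M_{a,b},M_{a',b'})\ge |b-b'|$, hence
\[
\max(|a-a'|,|b-b'|)\;\le\;p_L(M_{a,b},M_{a',b'})\;\le\;|a-a'|+|b-b'|.
\]

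The two sides of this inequality differ by at most a factor of $2$, which exhibits $(a,b)\mapsto v_{M_{a,b}}$ as a $(2,0)$-quasi-isometric embedding of $(\Z^2,\ell^\infty)$ into $\thegraph$; filling in a standard piecewise-linear extension along rays from $\Z^2$ to $\R^2$ then produces a 2-quasi-flat. The main obstacle I foresee is nothing conceptual but rather the verification that both directions of the single-step surgery move (adding and, especially, removing a lens-space summand) are genuinely realized by one surgery on one knot; once that is in hand, the algebraic lemma furnishes the lower bound for free and the conclusion follows.
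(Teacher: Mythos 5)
Your proposal is correct and takes essentially the same approach as the paper: the paper realizes the same two-parameter lattice of connected sums of lens spaces as surgeries on the unlink $U_{2n}$ (choosing coefficients $p_1$ and $p_1p_2$ rather than two distinct primes, which lets it record the distances exactly in its figure) and derives the lower bounds from the same Lemma \ref{lem:AlgebraicLemma} applied to first homology. Your two-sided estimate $\max(|a-a'|,|b-b'|)\le p_L(M_{a,b},M_{a',b'})\le |a-a'|+|b-b'|$ already gives a quasi-isometrically embedded copy of $\Z^2$, hence a 2-quasi-flat, so the argument goes through.
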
 

\begin{proof}
For each $n$, let $U_n$ be the unlink in $S^3$ with $n$ components with the natural homological framing.  Then we will consider the manifolds:

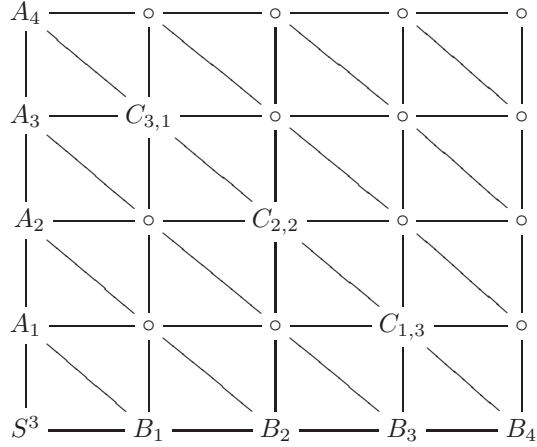
\begin{figure}
\begin{displaymath}
    \xymatrix{A_4 \ar@{-}[d] \ar@{-}[dr] \ar@{-}[r] & \circ \ar@{-}[d] \ar@{-}[d] \ar@{-}[dr] \ar@{-}[r] & \circ \ar@{-}[d] \ar@{-}[dr] \ar@{-}[r] & \circ \ar@{-}[d] \ar@{-}[r] \ar@{-}[dr] & \circ \ar@{-}[d]\\
A_3 \ar@{-}[d] \ar@{-}[dr] \ar@{-}[r] & C_{3,1} \ar@{-}[d] \ar@{-}[d] \ar@{-}[dr] \ar@{-}[r] & \circ \ar@{-}[d] \ar@{-}[dr] \ar@{-}[r] & \circ \ar@{-}[d] \ar@{-}[r] \ar@{-}[dr] & \circ \ar@{-}[d]\\\
    A_2 \ar@{-}[d] \ar@{-}[dr] \ar@{-}[r] & \circ \ar@{-}[d] \ar@{-}[d] \ar@{-}[dr] \ar@{-}[r] & C_{2,2} \ar@{-}[d] \ar@{-}[dr] \ar@{-}[r] & \circ \ar@{-}[d] \ar@{-}[r] \ar@{-}[dr] & \circ \ar@{-}[d]\\
    A_1 \ar@{-}[d] \ar@{-}[dr] \ar@{-}[r] & \circ \ar@{-}[d] \ar@{-}[d] \ar@{-}[dr] \ar@{-}[r] & \circ \ar@{-}[d] \ar@{-}[dr] \ar@{-}[r] & C_{1,3} \ar@{-}[d] \ar@{-}[r] \ar@{-}[dr] & \circ \ar@{-}[d]\\
               S^3 \ar@{-}[u] & B_1 \ar@{-}[l] &B_2 \ar@{-}[u] \ar@{-}[l] \ar@{-}[r]& B_3 \ar@{-}[r]& B_4}
\end{displaymath}
\caption{\label{fig:quasi-flat} This represents a $2$-quasi-flat in $\thegraph$. In particular, the distance between the manifolds in the figure can be determined using the edges in the figure.}
\end{figure}
$$A_j = S^3(U_{2n}; (\tfrac{p_1}{1}, \tfrac{p_1}{1},...\tfrac{1}{0}, \tfrac{1}{0})),$$ 
$$B_k = S^3(U_{2n}; (\tfrac{1}{0},\tfrac{1}{0},...., \tfrac{p_1p_2}{1}, \tfrac{p_1p_2}{1})), \mbox{ and }$$ 
$$C_{j,k} = S^3(U_{2n}; (\tfrac{p_1}{1},  ....\tfrac{1}{0},\tfrac{1}{0}, \tfrac{p_1p_2}{1}, \tfrac{p_1p_2}{1}, .... \tfrac{1}{0}, \tfrac{1}{0})).$$

In other words, the surgeries on the first $n$ components are either $\tfrac{p_1}{1}$ or $\tfrac{1}{0}$,  with surgery on the first $j$ components being $\tfrac{p_1}{1}$.  Let $p_1$ and $p_2$ be distinct  primes.  The surgeries on the second $n$ components are either $\tfrac{p_1p_2}{1}$ or $\tfrac{1}{0}$, with the first $k$ being $\tfrac{p_1p_2}{1}$.  The number of non-trivial fillings of $A_j$ is $j$, of $B_k$ is $k$, and of $C_{j,k}$ is $j+k$.  Thus we can take $n$ as large as needed and these are still well-defined. 

Then: 

$$H_1(A_j, \mathbb{Z}) = (\Z/p_1 \Z)^j,$$
$$H_1(B_k, \mathbb{Z})  = (\Z/ \ p_1p_2 \Z)^k, \mbox{ and }$$ 
$$H_1(C_{j,k}, \mathbb{Z})  = (\Z / p_1 \Z)^j \oplus (\Z / p_1p_2 \Z)^k.$$ 

Then, by repeated use of Lemma \ref{lem:AlgebraicLemma}, since every map to an abelian group factors through the homology, the distances between these manifolds are as in the diagram. 
\end{proof}

Using the same methods, we can show. 
\begin{theorem}\label{cor:largeQuasi-flats}
$\thegraph$ has a $4$-quasi-flat based at $S^3$. Furthermore, $\thegraph$ has a $4$-quasi-flat based at each vertex $v_M$. 
\end{theorem}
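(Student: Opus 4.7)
The plan is to mimic Theorem \ref{thm:notDeltaHyp}, using four independent primes to produce four independent directions. Fix four distinct primes $p_1, p_2, p_3, p_4$. For the statement based at a general vertex $v_M$, choose these primes additionally to be coprime to $|H_1(M;\Z)_{\text{tors}}|$. Place an unlink $U_{4n}$ of $4n$ components inside a small $3$-ball in the base manifold ($S^3$ or $M$), and partition its components into four groups of $n$. For $\vec{j} = (j_1,j_2,j_3,j_4)$ with $0 \le j_i \le n$, let $N_{\vec j}$ denote the closed manifold obtained by filling the first $j_i$ components of the $i$-th group along slope $p_i/1$ and the remaining components trivially along $1/0$. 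Because the unlink sits inside a ball, $N_{\vec 0}$ is the base manifold itself, and
$$H_1(N_{\vec j};\Z) \;\cong\; H_1(\text{base};\Z) \;\oplus\; \bigoplus_{i=1}^{4} (\Z/p_i\Z)^{j_i}.$$

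The upper bound on path length is immediate: changing a single $j_i$ by one corresponds to re-filling a single component, hence gives one edge of $\thegraph$, so
$$p_L(N_{\vec j}, N_{\vec{j}\,'}) \;\le\; \sum_{i=1}^{4} |j_i - j_i'|.$$
For the lower bound, apply Lemma \ref{lem:AlgebraicLemma} one prime at a time. Because the $p_i$ are pairwise distinct and coprime to the torsion of $H_1(M)$, the maximum $m$ such that $\pi_1(N_{\vec j})$ surjects $(\Z/p_i\Z)^m$ equals $a_i + j_i$, where $a_i$ depends only on the base ($a_i = 0$ for $S^3$ and $a_i = b_1(M)$ in general). Hence
$$p_L(N_{\vec j}, N_{\vec{j}\,'}) \;\ge\; |j_i - j_i'| \quad \text{for each } i, \qquad \text{so} \qquad p_L(N_{\vec j}, N_{\vec{j}\,'}) \;\ge\; \max_{i} |j_i - j_i'|.$$

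Combining the two estimates,
$$\|\vec j - \vec{j}\,'\|_\infty \;\le\; p_L(N_{\vec j}, N_{\vec{j}\,'}) \;\le\; \|\vec j - \vec{j}\,'\|_1 \;\le\; 4\,\|\vec j - \vec{j}\,'\|_\infty,$$
so, taking $n$ arbitrarily large, the map $\vec j \mapsto N_{\vec j}$ is a $(4,0)$-quasi-isometric embedding of $(\Z_{\ge 0}^4, \ell_\infty)$, and hence of $\mathbb{E}^4$, into $\thegraph$. Its image is a $4$-quasi-flat based at $N_{\vec 0}$, which is $S^3$ or $M$ respectively.

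The one point requiring care is the lower bound in the general case: we must prevent the $H_1(M)$ summand from contributing $(\Z/p_i\Z)$-surjections whose count varies with $\vec j$. Choosing the $p_i$ coprime to $|H_1(M)_{\text{tors}}|$ makes the contribution from $H_1(M)$ to the maximum surjection rank equal to the fixed constant $b_1(M)$, so the differences $|j_i - j_i'|$ survive unchanged when Lemma \ref{lem:AlgebraicLemma} is applied. I expect this verification to be the main, though essentially routine, obstacle.
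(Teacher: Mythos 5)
Your proof is correct and follows essentially the same route as the paper: surgeries on an unlink (equivalently, connected sums with lens spaces), the homology computation, and repeated use of Lemma \ref{lem:AlgebraicLemma} for the lower bound, with the general vertex handled by placing the unlink in a ball in $M$, which is the paper's connect-sum argument. The only differences are cosmetic: the paper uses $2n$ components with the nested slopes $p_1, p_1p_2, p_1p_2p_3, p_1p_2p_3p_4$ while you use $4n$ components with four independent primes (which makes the $\ell_\infty$ versus $\ell_1$ comparison cleaner), and your coprimality hypothesis on the $p_i$ relative to $H_1(M)$ is harmless but unnecessary, since the base manifold only shifts each surjection count by a constant that cancels in the differences.
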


\begin{proof}
By choosing four distinct primes $p_1,p_2,p_3$ and $p_4$, the graph $\thegraph$ can be seen to exhibit a large quasi-flat based at $S^3$. 
The vertices of such a quasi-flat are:

$$E_j = S^3(U_{2n}; (\tfrac{p_1}{1}, \tfrac{p_1}{1},...\tfrac{1}{0}, \tfrac{1}{0})), $$ 
$$N_j = (S^3(U_{2n}; (\tfrac{1}{0},\tfrac{1}{0},...., \tfrac{p_1p_2}{1}, \tfrac{p_1p_2}{1})),$$ 
$$W_j = S^3(U_{2n}; (\tfrac{p_1p_2p_3}{1}, \tfrac{p_1p_2p_3}{1},...\tfrac{1}{0}, \tfrac{1}{0})), \mbox{ and }$$ 
$$S_j = S^3(U_{2n}; (\tfrac{1}{0},\tfrac{1}{0},...., \tfrac{p_1p_2p_3p_4}{1}, \tfrac{p_1p_2p_3p_4}{1})).$$ 

In fact, if the manifolds $E_j,N_j,W_j$ and $S_j$ are connect summed with a given closed orientable manifold $M$, then by the same homology argument as above, there is a quasi-flat based at $M$. 
\end{proof}

\begin{remark}
This construction can be adapted to construct $k$-quasi-flats for arbitrarily large $k$. \end{remark}

The behavior of homology under Dehn filling is a key property of the $2n$ component split link in the argument above. The 
pairwise linking number of the components of that link is 0. The rest of this section will be 
devoted to finding an $2n+1$ component link that has similar behavior. First, we construct hyperbolic link where
 each one of the pairwise linking numbers is  $0$. This is accomplished as follows:

\begin{lemma}\label{lem:hypLinkingNoZero}
There is a knot $K$ in the complement of the $m$ component split link $U_{m}$ such that $S^3(U_{m}\cup K;-)$ is hyperbolic 
and all components have pairwise linking number $0$. 
\end{lemma}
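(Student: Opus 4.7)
The plan is to apply Myers' Theorem~\ref{thm:excellent} directly, after fixing a convenient initial choice of knot. Set $M = S^3 \setminus N(U_m)$; this is a compact connected 3-manifold whose boundary is a disjoint union of $m$ tori, in particular containing no 2-spheres or projective planes, so the hypothesis of Myers' theorem is met. Let $K_0 \subset M$ be a small unknot contained in a 3-ball disjoint from $U_m$. Then $K_0$ bounds a disk in the complement of every component $L_i$ of $U_m$, so $lk(K_0, L_i) = 0$ for each $i$. The components of $U_m$ trivially have pairwise linking number zero since $U_m$ is split, so the baseline linking data is exactly what we want.

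Next I would invoke Myers' theorem applied to the 1-manifold $K_0 \subset M$ to produce a knot $K$, homotopic to $K_0$ in $M$, whose exterior in $M$ is excellent. An excellent manifold with torus boundary admits a complete finite-volume hyperbolic structure by Thurston (\cite{THURSTONDEFORM}, as invoked earlier in the paper). Since the exterior of $K$ in $M$ is precisely $S^3 \setminus N(U_m \cup K)$, this immediately gives that $S^3(U_m \cup K;-)$ is hyperbolic, taking care of one half of the conclusion.

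For the other half, I would check that $K$ retains all vanishing linking numbers. The linking number $lk(K, L_i)$ equals the image of $[K]$ under the abelianization $\pi_1(S^3 \setminus L_i) \to H_1(S^3 \setminus L_i) \cong \Z$, so it depends only on the free homotopy class of $K$ in $S^3 \setminus L_i$. Since $M \subset S^3 \setminus L_i$ and $K$ is homotopic to $K_0$ inside $M$, we get $lk(K, L_i) = lk(K_0, L_i) = 0$ for every $i$, while the linking numbers among components of $U_m$ itself remain $0$ as before.

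The argument is essentially a one-step application of Myers' machinery, so there is no substantive obstacle; the only thing to be careful about is that the manifold $M$ in which Myers is applied has boundary of the permissible form (tori only) and that the ``homotopic to $K_0$'' output is strong enough to preserve linking numbers, which it is because linking with a fixed component is a homological, hence homotopy, invariant in $S^3 \setminus L_i$.
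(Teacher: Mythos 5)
Your argument is correct, but it takes a genuinely different and more economical route than the paper. The paper's proof follows the machinery of \S\ref{sect:hyperbolic_graph}: it builds $K$ by hand as a cyclic union of excellent arcs $a_i$ in the exterior of $U_m$ together with arcs in collars $H_i \cong T^2 \times I$ of the boundary tori, choosing the arc in $H_i$ to have intersection number $-\lambda_i$ with the annulus $A_i = H_i \cap D_i$ so that the algebraic intersection of $K$ with each disk $D_i$ cancels to $0$, and then certifies that the resulting exterior is excellent via Myers' Gluing Lemma~\ref{lem:gluing}. You instead start from a knot $K_0$ that already has the desired homological data (a small unknot in a ball disjoint from $U_m$), apply the full homotopy statement of Myers' Theorem~\ref{thm:excellent} to $J = K_0$ inside $M = S^3 \setminus N(U_m)$ (whose boundary consists of tori, so the hypotheses hold), and observe that $\mathrm{lk}(K,L_i)$ is the class of $K$ in $H_1(S^3\setminus L_i)\cong \Z$, hence unchanged under the free homotopy, which takes place in $M \subset S^3 \setminus L_i$; excellence of the exterior then gives hyperbolicity of $S^3(U_m\cup K;-)$ by Thurston, exactly as the paper uses elsewhere (e.g.\ in Proposition~\ref{prop:WeightTwo}). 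What the paper's construction buys is explicit geometric control over how $K$ traverses each collar and meets the spanning disks, in the same style as the arc constructions of Lemma~\ref{lem:greattangle}, which makes the ``Myers component'' bookkeeping transparent; what your argument buys is brevity, since the linking condition comes for free from homotopy invariance rather than from an intersection-number correction, and it proves the lemma as stated with no loss for its later use in Theorem~\ref{thm:BHnotDeltaHyp}.
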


\begin{proof}
The proof is similar to the methods in Section \ref{sect:hyperbolic_graph}. Consider the link exterior $M = S^3(U_{m};-)$ and label each boundary component by $C_i$. 
Let $H_i$ be a neighborhood of each $C_i$ in $M$ By \cite{MYERSEXCELLENT}, we can drill out a set of excellent arcs $a_i$ $1\leq i < m$ from $N= M \setminus  \bigcup H_i$ such that $a_i$ 
connects a neighborhood of the $i$th component with the $(i+1)$st component and $a_m$ connects 
the last component to the first. Furthermore, orient each $a_i$ such 
that $a_i$ is based on a neighborhood of the $i$th component.
For convenience denote the last arc by $a_0$ and $a_m$. 
If $D_i$ is the disk in $M$ with $C_i$ as a boundary, the union of the the arcs in $N$ will 
have oriented intersection number $\lambda_i$ with $D_i \cap N$. 

Let $A_i = H_i \cap D_i$. With a slight abuse of notation we consider $H_i$ as homeomorphic to $T^2 \times I$ with
the marked annulus $A_i$ embedded in it. Again by \cite{MYERSEXCELLENT}, we can 
drill out an excellent arc from $H_i$ in any homotopy class, and hence with any intersection number with $A_i$, connecting the endpoints of $a_i$ and $a_{i-1}$. Here, we choose $-\lambda_i$ to be this intersection number. 

Let $K = \cup a_i$ be an oriented knot in $S^3(L_m;-)$.
For each component $C_i$ of $L_{m}$, the disk $D_i$ is also a Seifert surface for $C_i$. Thus, the pairwise linking number of $C_i$ and 
$K$ is the oriented intersection number of $K$ with $D_i$, $0= \lambda_i-\lambda_i$.   
\end{proof}

In the above proof, there is a special component of the link, $K$, such that drilling out $K$ from $S^3(L_m;-)$ is hyperbolic. We call this component of the link $L_m \cup K$ the \emph{Myers component}. For a general $n$-component link an $(n-1)$-component must be specified to determine the Myers component.

%

\begin{theorem}\label{thm:BHnotDeltaHyp}
$\thehypgraph$ is not $\delta$-hyperbolic.
\end{theorem}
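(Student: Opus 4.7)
The plan is to adapt the construction of Theorem \ref{thm:notDeltaHyp} to the hyperbolic setting by replacing the split unlink $U_{2n}$ with a hyperbolic link having the same linking behavior. Applying Lemma \ref{lem:hypLinkingNoZero} to $U_{2n}$ produces a Myers component $K$ such that $S^3 \setminus (U_{2n} \cup K)$ is hyperbolic and every pair of components of $U_{2n} \cup K$ has linking number zero. Since all linking numbers vanish, each preferred longitude is null-homologous in $H_1$ of the link exterior, so Dehn filling a component along slope $p\mu + q\lambda$ contributes a $\Z/p\Z$ summand to $H_1$, exactly as in the unlink case.

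Fix a prime $T$ coprime to $p_1 p_2$ and define closed manifolds $A_j^h$, $B_k^h$, $C_{j,k}^h$ by the same slope recipes as $A_j$, $B_k$, $C_{j,k}$ on the $U_{2n}$ components, together with slope $T/1$ on $K$. By the linking-number observation, $H_1$ of each of these manifolds equals the first homology of the corresponding manifold in Theorem \ref{thm:notDeltaHyp} plus a $\Z/T\Z$ summand; since $\gcd(T, p_1 p_2) = 1$, the maximum rank of a $(\Z/p_i\Z)^r$ quotient is unchanged. Lemma \ref{lem:AlgebraicLemma} applied with $p = p_1$ and $p = p_2$ therefore yields the same lower bounds on $p_L$ distances as before, and because $\thehypgraph$ is a subgraph of $\thegraph$ these lower bounds persist in $\thehypgraph$. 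For the upper-bound edges in $\thehypgraph$, every grid-neighbor pair should be realized via a one-cusped hyperbolic manifold, namely the partial filling of $S^3 \setminus (U_{2n} \cup K)$ that leaves one chosen cusp open. By Thurston's Hyperbolic Dehn Surgery Theorem \cite[Theorem 5.8.2]{THURSTONNOTES} applied to each of the $2n + 1$ such partial fillings, all but finitely many choices of the remaining slopes yield hyperbolic manifolds. Rescaling all slopes by a common large integer $N$ coprime to $p_1 p_2 T$ (so $p_1 \mapsto Np_1$, $p_1 p_2 \mapsto Np_1 p_2$, $T \mapsto NT$) simultaneously puts every required partial filling into the hyperbolic regime while preserving every $\Z/p_i$-rank computation.

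Combining the algebraic lower bound from Lemma \ref{lem:AlgebraicLemma} with the obvious grid upper bound, the assignment $(j,k) \mapsto C_{j,k}^h$ (with $A_j^h, B_k^h$ along the coordinate axes) is a quasi-isometric embedding of $\mathbb{E}^2$ into $\thehypgraph$, producing a $2$-quasi-flat and obstructing $\delta$-hyperbolicity. The main obstacle is ensuring that both the closed vertices and every one-cusped intermediary used for edges lie in the Thurston hyperbolic regime simultaneously; this amounts to intersecting finitely many cofinite sets of slopes, and the uniform rescaling trick handles it, though one must check that the rescaling does not collapse distinct grid points to a common manifold, which follows from the homology distinction once $N$ is chosen generically.
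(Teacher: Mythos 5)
Your overall strategy is the paper's: use Lemma \ref{lem:hypLinkingNoZero} to get a hyperbolic link whose components have pairwise linking number zero, exploit the vanishing linking numbers to compute $H_1$ of the fillings, get lower bounds on distance from Lemma \ref{lem:AlgebraicLemma} (which persist in $\thehypgraph$ since it is a subgraph of $\thegraph$), and get hyperbolicity of the closed vertices and of the one-cusped edge manifolds from Thurston's Hyperbolic Dehn Surgery Theorem. The extra prime $T$ on the Myers component is harmless (the paper instead fills $K$ along $\tfrac{1}{q'}$, which is homologically trivial), and the homological bookkeeping under your rescaling is fine.

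The genuine gap is in the hyperbolicity step. By taking ``the same slope recipes as $A_j$, $B_k$, $C_{j,k}$,'' your grid manifolds and your one-cusped edge manifolds still carry the meridional slope $\tfrac{1}{0}$ on all components of $U_{2n}$ that are not being nontrivially surgered, and the rescaling $p_1\mapsto Np_1$, $p_1p_2\mapsto Np_1p_2$, $T\mapsto NT$ does not move those slopes at all. Thurston's theorem only excludes a finite set of slopes on each cusp, and a fixed slope such as $\tfrac{1}{0}$ cannot be pushed out of the exceptional set by making the \emph{other} coefficients large. Worse, filling an unknotted split component along its meridian simply deletes it, so your closed manifolds are surgeries on a proper sublink of $U_{2n}\cup K$, and Lemma \ref{lem:hypLinkingNoZero} gives hyperbolicity only of the full complement, not of these sublink complements; hence neither the closed grid manifolds nor the one-cusped intermediaries are shown to be hyperbolic, so you have not verified they give vertices and edges of $\thehypgraph$. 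The repair is to banish trivial fillings entirely: replace each $\tfrac{1}{0}$ by a homologically trivial slope $\tfrac{1}{q}$ with $|q|$ large (as the paper does with $\tfrac{1}{q'}$ on the Myers component). Then $H_1$ is unchanged, every filled slope can be chosen outside the finitely many exceptional sets of the single cusped manifold $S^3\setminus(U_{2n}\cup K)$ and of its partial fillings, and the rest of your argument (lower bounds via $\Z/p_1$- and $\Z/p_2$-ranks, upper bounds by changing one filling at a time) goes through as in Theorem \ref{thm:notDeltaHyp}.
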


\begin{proof} As above, we construct a quasi-flat. 
Using the link $L'=L_{n} \cup K$ as in Lemma
 \ref{lem:hypLinkingNoZero}, we have that $S^3(L';-)$ is 
 hyperbolic and each pair of components has linking number $0.$  
 This condition implies that $K$, an embedded curve, is null homologous in 
 $$S^3(L';(\tfrac{r_1}{s_1},...,\tfrac{r_n}{s_n}, \tfrac{1}{0}))\cong L(r_1,s_1) \#... \#L(r_n,s_n),$$
 since the homology class of $K$ is determined by the sum of the oriented mod $r_i$ intersection number
 with the Seifert surface of the $i$th component of $L_n$. One can be observe this directly by consideration of $K$ as curve in $S^3(L';(\tfrac{1}{0},...,\tfrac{1}{0}, \tfrac{r_i}{s_i},\tfrac{1}{0},...,\tfrac{1}{0}, \tfrac{1}{0}))\cong L(r_i,s_i)$.

 Thus, $$H_1(S^3(L'; (\tfrac{r_1}{s_1},...,\tfrac{r_n}{s_n}, \tfrac{1}{q'})),\Z)=  
 \Z/r_1\Z \oplus ... \oplus \Z/r_n \Z,$$ 
\noindent{} and so we can choose surgery coefficients such that the homology of the fillings behaves
analogously to the manifolds $A_j$,$B_k$, and $C_{j,k}$ as in the proof of Theorem \ref{thm:notDeltaHyp}. 
%
%
%
%
%
 
 Finally, we remark that choosing 
 sufficiently large choices of primes $p_1$ and $p_2$ and a large 
 choice of $q'$, the manifolds obtained by filling the first $n$ 
 components of $S^3(L'; (-,...,-, \tfrac{1}{q'}))$ by either $\tfrac
 {p_1}{1}$ or $\tfrac{p_1 p_2}{1}$ is hyperbolic by 
 Thurston's Hyperbolic Dehn Surgery Theorem \cite[Theorem 5.8.2]{THURSTONNOTES}.
 \end{proof}

\bibliography{BigDehn2015March-for-arxiv}
\bibliographystyle{plain}

\end{document}